\newcommand{\inv}{^{\raisebox{.2ex}{$\scriptscriptstyle-1$}}}
\newcommand{\la}{\mathcal{Q}}
\newcommand{\id}{\mathcal{I}_\la}
\newcommand{\idp}{\mathcal{I}^+_\la}
\newcommand{\ua}{^{\uparrow}}
\newcommand{\spc}{\mathrm{Spec}_\la}
\newcommand{\mx}{\mathrm{Max}_\la}
\newcommand{\pr}{\mbox{{\small{$\&$}}}}
\newtheoremstyle{theoremdd}
{15pt}
{10pt}
{\itshape}
{0pt}
{\scshape}
{. ---}
{ }
{\thmname{#1}\thmnumber{ #2}\textnormal{\thmnote{ (#3)}}}
\theoremstyle{theoremdd}
\newtheorem{theorem}{Theorem}[section]
\newtheorem{proposition}[theorem]{Proposition}
\newtheorem{lemma}[theorem]{Lemma}
\newtheorem{corollary}[theorem]{Corollary}
\newtheoremstyle{thmdd}
{15pt}
{10pt}
{}
{0pt}
{\scshape}
{. ---}
{ }
{\thmname{#1}\thmnumber{ #2}\textnormal{\thmnote{ (#3)}}}
\theoremstyle{thmdd}
\newtheorem{remark}[theorem]{Remark}
\numberwithin{equation}{section}
\begin{document}

\author{Amartya Goswami}
\address{\scriptsize{[1] Department of Mathematics and Applied Mathematics, University of Johannesburg, South Africa;
[2] National Institute for Theoretical and Computational Sciences (NITheCS), South Africa.}}
\email{\scriptsize{agoswami@uj.ac.za}}

\title{On ideals in quantales, II}

\date{}

\subjclass{06F07; 54B35}


\keywords{Quantale; quasi-compactness, sobriety, spectral space}

\maketitle

\begin{abstract}
\scriptsize{As a natural extension of the ongoing development of a theory of ideals in commutative quantales with an identity element, this article aims to study into the analysis of certain topological properties exhibited by distinguished classes of ideals. These ideals are equipped with quantale topologies. The primary objectives encompass characterizing quantale spaces exhibiting sobriety, examining several conditions pertaining to quasi-compactness, and demonstrating that quantale spaces comprised of proper ideals adhere to the spectral properties as defined by Hochster. We introduce the notion of  a strongly disconnected spaces and show that for a quantale with zero Jacobson radical, strongly disconnected quantale spaces containing all maximal ideals of the quantale imply existence of non-trivial idempotent  elements in the quantale. Additionally, a sufficient criterion for establishing the connectedness of a quantale space is presented. Finally, we discuss on continuous maps between quantale spaces.}
\end{abstract}

\smallskip 
\section{Introduction}
\smallskip 

The concept of introducing topological structures to different classes of ideals traces its origins back to \cite{Sto37}. In that work, a specific topology referred to as the Stone topology or hull-kernel topology was established for the set of prime ideals in a Boolean ring.

Subsequently, in \cite{Jac45}, a similar hull-kernel topology, recognized as the Jacobson topology, was imposed on the collection of primitive ideals in a ring. Notably, \cite{McC48} noted the applicability of this same topology to the set of generalized prime ideals as defined therein.

In the context of affine schemes, as explored in \cite{Gro60}, the hull-kernel topology (known as the Zariski topology) was considered for the set of prime ideals in a commutative ring. Building upon this, \cite{Olb16} demonstrated the utilization of the Zariski topology in establishing that every ideal can be uniquely represented in terms of irreducible ideals, eliminating redundancy.

Further investigations ensued in the field. \cite{HJ65} studied into the hull-kernel topology's behaviour concerning minimal prime ideals within a commutative ring. Meanwhile, \cite{Azi08} embarked on a study of strongly irreducible ideals in a commutative ring, specifically within the context of the Zariski topology.
Exploration of the topological properties exhibited by distinct classes of ideals in a commutative ring extended to \cite{DG22} and \cite{FGS23}.

Beyond the realm of rings, investigations into the imposition of topologies on diverse classes of ideals or substructures extend to various algebraic structures. For instance, within the domain of $R$-modules, \cite{FFS16} examines topologies applied to ideals and proper ideals, while \cite{FS20} delves into primary ideals and principal ideals with constructible topologies.

Turning to semirings, topologies applied to prime, maximal, and strongly irreducible ideals are explored in \cite{Is\'{e}56, Is\'{e}56', IM56, Gol99}. In the specialized context of $(m,n)$-semirings, \cite{H al.18} investigates the structural aspects of $n$-ary prime $k$-ideals, $n$-ary prime full $k$-ideals, $n$-ary prime ideals, maximal ideals, and strongly irreducible ideals.

In the domain of semigroups, \cite{Gos23''} delves into the application of topologies to primitive ideals, while \cite{Gos22} undertakes a similar exploration for various types of subgroups within a group.

A comprehensive exploration of different topologies related to prime spectra, including variations like minimal prime, coprime, and fully prime spectra, is available in \cite{A11, A15, A10, AF14, AH19, AKF16, AP17, BH08, MMS97, MS06, Tek09, UTS15}. Moreover, the study of prime spectra within multiplicative lattices is addressed in \cite{Fac23, FFJ22}.

The objective of this paper is to extend these investigations into the realm of lattice theory, specifically within the context of commutative unital quantales. This expansion is not only reasonable but also well-founded, as the collection of ideals in a commutative ring inherently forms a quantale. Furthermore, by considering intersection as the multiplication operation, we uncover additional instances of quantales that emerge as substructures of the aforementioned algebraic entities.

For a comprehensive exploration of the broader theory of quantales (also known as a special type of multiplicative lattices), we direct our readers to references such as \cite{Ros90, Mul86, And74, Dil36, Dil62, KP08, War37, WD39}. The concept of an ideal within a quantale was introduced in \cite{Gos23'}, where algebraic properties of specific distinguished classes of quantale ideals were examined.

Within the scope of this paper, we introduce the adaptation of a specific topology, termed the ``quantale topology,'' which can be applied to any class of ideals in a quantale. Our primary goal is to study various topological aspects intrinsic to these corresponding spaces.

Let us briefly outline the paper's structure. In Section \ref{prlm}, we revisit crucial definitions and pertinent facts concerning quantale ideals. The groundwork for imposing quantale topologies on various classes of ideals is laid out in Section \ref{qtt}. We elucidate our choice of subbasic closed sets as the fundamental constituents for constructing our topology.
Section \ref{qct} deals with the examination of quasi-compactness of quantale spaces, offering a range of characterizations. The discussion then shifts to lower separation axioms for quantale spaces, encompassing $T_0$, $T_1$, and sobriety in Section \ref{spax}.
Demonstrating the spectrality of the quantale space of proper ideals is also the focus of study in this section. Finally, in Section \ref{cac}, we talk about the concept of continuity among quantale spaces and introduce a distinctive form of connectedness that correlates with the presence of idempotent elements in a quantale.

\section{Preliminaries}\label{prlm}
\smallskip 

Let us now revisit fundamental definitions pertaining to ideals of quantales. For a detailed study of these concepts and related results, we refer our readers to \cite{Gos23'}.
Recall that an \emph{unital commutative quantale} is a complete lattice $(\la, \preccurlyeq, \bot, \top)$ endowed with a binary operation $\pr $ on $\la$, satisfying the following axioms:
\begin{enumerate}
		
\item\label{mla} $x\pr  (y\pr  z)=(x\pr  y)\pr  z$,
		
\item\label{mlc} $x\pr  y = y\pr  x$,
		
\item\label{jdp} $x\pr  \left(\bigvee\limits_{\lambda \in \Lambda} y_{\lambda}\right)=\bigvee\limits_{\lambda \in \Lambda}\left(x\pr  y_{\lambda}\right)$, 
		
\item \label{mid} $x\pr  \top=x$,
\end{enumerate}
for all $x,$ $y,$ $y_{\lambda}$, $z\in \la$, and for all $\lambda \in \Lambda$, where $\Lambda$ is an index set.
Throughout this paper, unless otherwise stated, by a ``quantale,'' we shall always refer to a commutative quantale with the top element  $\top$ as the identity with respect to $\pr$. We shall use the notation $x^n$ to denote $x\pr  \cdots \pr  x$ (repeated $n$ times).
A nonempty subset $I$ in $\la$ is called an \emph{ideal}, if for all $x,$ $y,$ $l\in \la$, the following two axioms hold:
\begin{enumerate}
		
\item\label{cuj} $x,$ $ y \in I$ implies that $x\vee y\in I$,  and 
		
\item\label{cul} $x\in I$ and $l\preccurlyeq x$ implies that $l\in I$.
\end{enumerate}
We shall denote the set of all ideals in $\la$ by $\id$, and by $\theta$, the zero ideal in $\la$. 
Let $\la$ be a quantale.
The \emph{meet} $\bigwedge_{\lambda \in \Lambda} I_{\lambda}$ of ideals $\{I\}_{\lambda\in \Lambda}$  in $\la$ is given by their intersection.
The \emph{join} of ideals $\{I\}_{\lambda\in \Lambda}$  of $\la$ is defined by
\begin{equation*}
\bigvee_{\lambda \in \Lambda} I_{\lambda}=\left\{l\in \la\mid l\preccurlyeq \bigvee_{\text{finite}} x_{\lambda}\;\text{for}\; x_{\lambda}\in I_{\lambda}\right\}.
\end{equation*}
The \emph{product} of two ideals $I$ and $J$ in $\la$ is defined as
\begin{equation*}\label{prij}
I\pr J=\left\{l\in \la\mid l\preccurlyeq \bigvee_{\text{finite}} \left\{x\pr y\mid x\in I, y\in J\right\}\right\}.
\end{equation*}
If $S$ is a nonempty subset in a $\la$, then the \emph{ideal generated by} $S$ is defined by \[\langle S\rangle=\left\{x\in \la \mid x\preccurlyeq \bigvee_{i=1}^na_i,\;\text{for some}\;n\in \mathds{N}\;\text{and}\; a_i\in \la \pr S\right\}.\] 
Let $\la$ be a quantale.
An ideal $I$ of $\la$ is called \emph{proper} if $I\neq \la$, and we shall denote the set of all proper ideals in $\la$ by $\idp$.
A proper ideal $M$ of $\la$ is called \emph{maximal} if there is no other proper ideal $I$ of $\la$ properly containing $M$, and we denote the set of all maximal ideals in $\la$ by $\mathrm{Max}_\la$. A quantale $\la$ with exactly one maximal ideal is called \emph{local}.
An ideal $I$ of $\la$ is called \emph{minimal} if $I\neq 0$ and $I$ properly contains no other nonzero ideals in $\la$.

A proper ideal $P$ of  $\la$ is called \emph{prime} if $x\pr y\in P$ implies $x\in P$ or $y\in P$ for all $x,$ $y\in \la$. By $\spc$, we denote the set of all prime ideals in $\la$. An ideal $P$ of $\la$ is called \emph{minimal prime} if $P$ is both a minimal ideal and a prime ideal. A quantale
$\la$ is called \emph{Noetherian} if every ascending chain of ideals in $\la$ is eventually stationary. An element $e$ in a quantale $\la$ is called \emph{idempotent} if $e^2=e$.
The \emph{Jacobson radical} $\mathcal{J}(\la)$ of a quantale $\la$ is defined as the intersection of all maximal ideals in $\la$.

A proper ideal $P$ in a quantale $\la$ is called \emph{primary} if for $x,$ $y\in \la$ and $x\pr y\in I$ imply that $x\in I$ or $y^n\in I$ for some  $n\in \mathds{N}$. An ideal $I$ of $\la$ is called \emph{irreducible} if  $J\wedge  J'= I$ implies that either $J= I$ or $J'= I$ for all $J,$ $J'\in \id$.
An ideal $I$ of $\la$ is called \emph{strongly irreducible} if  $J\wedge  J'\preccurlyeq I$ implies that either $J\preccurlyeq  I$ or $J'\preccurlyeq I$ for all $J,$ $ J'\in \id$.
By $\Sigma_{\la}$, we shall mean either $\idp$ or any subclass of ideals of it. All the classes of ideals mentioned above are examples of $\Sigma_{\la}$.

\smallskip  

\section{Quantale topology}\label{qtt}
\smallskip 

Consider a quantale $\la$ and let $\Sigma_{\la}$ be a class of ideals in $\la$. Our objective is to define a topology on $\Sigma_{\la}$. We aim to achieve a topology that coincides with the Stone topology when $\Sigma_{\la}$ consists of maximal ideals or with the Zariski topology when $\Sigma_{\la}$ consists of prime ideals in $\la$, and so on. In case of rings, however, it is well-known that neither the Zariski topology, the Stone topology, nor any hull-kernel topology can be imposed on an arbitrary class of ideals.

We will now demonstrate that a similar limitation applies to quantales. The following theorem characterizes the ``largest'' class of ideals in a quantale on which a hull-kernel topology can be imposed. This result, which was initially established in \cite[\textsection 2.2, p.11]{McK53} for rings, is adapted to the context of quantales. 
For a class $\Sigma_{\la}$ of ideals of $\la$, we consider the collection \[\mathcal{B}_{\Sigma_{\la}}:=\left\{X\ua\mid X\preccurlyeq \id\right\}\] of subsets of $\Sigma_{\la}$ given by:
\begin{equation}\label{sbcs}
X\ua =\begin{cases}
\left\{J\in \Sigma_{\la} \mid \mathcal{D}_X:=\wedge X\preccurlyeq J\right\}, & X\neq \emptyset;\\
\emptyset, & X=\emptyset.	
\end{cases}
\end{equation}  
Whenever $X=\{I\},$ we shall write $I^{\uparrow}$ for $\{I\}^{\uparrow}.$

\begin{theorem}\label{hkt}
The collection $\mathcal{B}_{\Sigma_{\la}}$ induces a closed-set topology on  $\Sigma_{\la}$  if and only if the class  $\Sigma_{\la}$ of ideals satisfies the following property:    
\begin{equation}\label{hkp}
I\wedge  J\preccurlyeq S\;\; \text{implies}\;\; I\preccurlyeq S\;\; \text{or}\;\; J\preccurlyeq S,	
\end{equation}
for all $I,$ $J\in \id$ and for all $S\in  \Sigma_{\la}.$  
\end{theorem}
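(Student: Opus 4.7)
The plan is to verify the four closed-set axioms for $\mathcal{B}_{\Sigma_{\la}}$ and to pinpoint where (\ref{hkp}) enters. Since arbitrary meets of ideals of $\la$ are again ideals, $X^{\uparrow}$ depends only on the single ideal $\wedge X\in\id$, and $\mathcal{B}_{\Sigma_{\la}}=\{\emptyset\}\cup\{\{I\}^{\uparrow}\mid I\in\id\}$. Then $\emptyset\in\mathcal{B}_{\Sigma_{\la}}$ by convention, $\Sigma_{\la}=\{\theta\}^{\uparrow}$ since every ideal contains the zero ideal $\theta$, and for arbitrary intersections one has
\[
\bigcap_{\lambda\in\Lambda} X_{\lambda}^{\uparrow}=\Bigl\{\bigvee_{\lambda\in\Lambda}(\wedge X_{\lambda})\Bigr\}^{\uparrow},
\]
using that $\wedge X_{\lambda}\preccurlyeq J$ for all $\lambda$ is equivalent to $\bigvee_{\lambda}(\wedge X_{\lambda})\preccurlyeq J$ in the complete lattice $\la$, and that joins of ideals are ideals. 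These three axioms hold unconditionally.

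The substantive axiom is closure under finite unions, which I would reduce to the binary case. Writing $I=\wedge X$ and $J=\wedge Y$, the containment $X^{\uparrow}\cup Y^{\uparrow}\subseteq(X\cup Y)^{\uparrow}=\{I\wedge J\}^{\uparrow}$ is automatic since $I\wedge J\preccurlyeq I,J$; the reverse containment $\{I\wedge J\}^{\uparrow}\subseteq X^{\uparrow}\cup Y^{\uparrow}$ is verbatim the content of (\ref{hkp}) for the pair $I,J\in\id$. Hence (\ref{hkp}) is equivalent to the equality $X^{\uparrow}\cup Y^{\uparrow}=(X\cup Y)^{\uparrow}$, and sufficiency follows by induction on the size of a finite union.

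For necessity, assuming $\mathcal{B}_{\Sigma_{\la}}$ is closed under binary unions, there exists $K\in\id$ with $\{I\}^{\uparrow}\cup\{J\}^{\uparrow}=\{K\}^{\uparrow}$, and both sides lie in $\{I\wedge J\}^{\uparrow}\in\mathcal{B}_{\Sigma_{\la}}$. I would apply the antitone Galois correspondence $I\mapsto\{I\}^{\uparrow}$, $A\mapsto\bigcap A$ between $\id$ and the family of $\{\cdot\}^{\uparrow}$-closed subsets of $\Sigma_{\la}$, combined with the identity $\bigcap\bigl[\{I\}^{\uparrow}\cup\{J\}^{\uparrow}\bigr]=\bigl(\bigcap\{I\}^{\uparrow}\bigr)\wedge\bigl(\bigcap\{J\}^{\uparrow}\bigr)$, to force $\{K\}^{\uparrow}=\{I\wedge J\}^{\uparrow}$; the resulting inclusion $\{I\wedge J\}^{\uparrow}\subseteq\{I\}^{\uparrow}\cup\{J\}^{\uparrow}$ is (\ref{hkp}). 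The main obstacle is this necessity step: one must ensure that the $K$ produced by the closure hypothesis is $\{\cdot\}^{\uparrow}$-equivalent to $I\wedge J$, and the Galois closure identities are the mechanism for doing so.
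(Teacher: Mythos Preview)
Your sufficiency argument is correct and is essentially the paper's, recast from the Kuratowski closure language into the closed-set axioms: the three ``easy'' axioms hold unconditionally, and (\ref{hkp}) is exactly the reverse inclusion $\{I\wedge J\}^{\uparrow}\subseteq\{I\}^{\uparrow}\cup\{J\}^{\uparrow}$, yielding closure under finite unions.

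The gap is precisely where you flag it. From $\{I\}^{\uparrow}\cup\{J\}^{\uparrow}=\{K\}^{\uparrow}$ the Galois correspondence gives only
\[
\bigcap\{K\}^{\uparrow}=\bigl(\bigcap\{I\}^{\uparrow}\bigr)\wedge\bigl(\bigcap\{J\}^{\uparrow}\bigr)=\bar I\wedge\bar J,
\qquad\text{hence}\qquad \{K\}^{\uparrow}=\{\bar I\wedge\bar J\}^{\uparrow},
\]
where $\bar I:=\bigcap\{I\}^{\uparrow}$. To reach $\{K\}^{\uparrow}=\{I\wedge J\}^{\uparrow}$ you would need $\{I\wedge J\}^{\uparrow}\subseteq\{\bar I\wedge\bar J\}^{\uparrow}$, i.e.\ $\bar I\wedge\bar J\preccurlyeq\overline{I\wedge J}$; but the closure $I\mapsto\bar I$ need not commute with binary meets, and only the opposite inequality $\overline{I\wedge J}\preccurlyeq\bar I\wedge\bar J$ is automatic. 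Concretely, if some $S\in\Sigma_{\la}$ satisfies $I\wedge J\preccurlyeq S$ while $I\not\preccurlyeq S$ and $J\not\preccurlyeq S$, nothing in your argument rules out $S\notin\{K\}^{\uparrow}$. So knowing merely that the union lands in $\mathcal{B}_{\Sigma_{\la}}$ is not enough; you have not used the full strength of the hypothesis.

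The paper sidesteps this by working with the Kuratowski axiom $(X\cup Y)^{\uparrow}=X^{\uparrow}\cup Y^{\uparrow}$ rather than with mere closure under unions: that axiom \emph{specifies} that the union equals $(X\cup Y)^{\uparrow}=\{\mathcal{D}_X\wedge\mathcal{D}_Y\}^{\uparrow}$, from which (\ref{hkp}) is read off directly. The passage from ``$\mathcal{B}_{\Sigma_{\la}}$ is the family of closed sets of some topology'' to ``its closure operator is $(-)^{\uparrow}$'' is not free; it is exactly the content of the paper's next result, Theorem~\ref{kclcb}, which shows that if $\mathcal{B}_{\Sigma_{\la}}$ is a closed basis then the induced closure operator must coincide with $(-)^{\uparrow}$. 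If you want to run necessity directly from the closed-set axioms, you must supply that identification (for instance by computing, as in the proof of Theorem~\ref{kclcb}, that the topological closure of any $T\subseteq\Sigma_{\la}$ is $\{\mathcal{D}_T\}^{\uparrow}$), after which the Kuratowski identity and hence (\ref{hkp}) follow.
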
  

\begin{proof}  
We shall show that $(-)\ua$ is a Kuratowski closure operator on $\Sigma_{\la}$ if and only if the class $\Sigma_{\la}$ of ideals satisfies the condition (\ref{hkp}). The facts that $\emptyset\ua=\emptyset$ and $X\preccurlyeq X\ua$ follow from (\ref{hkp}), whereas the condition   $(X\ua)\ua=X\ua$ follows from the monotone property of $(-)\ua$. Finally, what remains is to show that $(X\vee Y)\ua=X\ua \vee Y\ua$ holds for any $X,$ $Y\preccurlyeq \id$ if and only if $\Sigma_{\la}$ satisfies (\ref{hkp}). The fact that $X\ua \vee Y\ua\preccurlyeq(X\vee Y)\ua$ follows from monotonicity of $(-)\ua$ and from the properties proven above. Suppose $J\in (X\vee Y)\ua.$ Then $(\mathcal{D}_X)\vee (\mathcal{D}_Y)\preccurlyeq J,$ and hence, $J\in X\ua\vee Y\ua$ if and only if $\Sigma_{\la}$ satisfies (\ref{hkp}).
\end{proof}  

\begin{remark}
It is worth noting that the class of ideals in a quantale that satisfies property (\ref{hkp}) defined earlier corresponds exactly to the class of strongly irreducible ideals. Moreover, it can be easily observed that the classes of maximal, prime, and minimal prime ideals in a quantale also satisfy property (\ref{hkp}). Therefore, these classes serve as suitable candidates upon which a hull-kernel topology can be imposed.
\end{remark}

Whenever $(-)\ua$ is a Kuratowski closure operator on a class $\Sigma_{\la}$ of ideals in $\la$, the collection $\mathcal{B}_{\Sigma_{\la}}$ of subsets of $\Sigma_{\la}$ are exactly the closed subsets of the hull-kernel  topology induced on $\Sigma_{\la}$. Therefore, $\mathcal{B}_{\Sigma_{\la}}$ is a closed basis of that topology. In fact, we can obtain even more, as demonstrated by the following theorem.

\begin{theorem}\label{kclcb} 
If $\la$ is a quantale, then  $(-)\ua$ is a Kuratowski closure operator on a class of ideals $\Sigma_{\la}$ if and only if $\mathcal{B}_{\Sigma_{\la}}$ is a closed basis of the corresponding hull-kernel topology. 
\end{theorem}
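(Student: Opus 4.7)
The plan is to build on Theorem \ref{hkt}, which already identifies $(-)\ua$ being a Kuratowski closure operator with property (\ref{hkp}), and to show directly that (\ref{hkp}) is in turn equivalent to $\mathcal{B}_{\Sigma_\la}$ being a closed basis of the hull-kernel topology it generates.

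For the forward direction, I would argue that if $(-)\ua$ is a Kuratowski closure operator, then the induced topology on $\Sigma_\la$ has as closed sets precisely the fixed points of $(-)\ua$. By idempotency $(X\ua)\ua = X\ua$, every $X\ua \in \mathcal{B}_{\Sigma_\la}$ is such a fixed point; conversely, any closed $F$ satisfies $F = F\ua \in \mathcal{B}_{\Sigma_\la}$ (taking the parametrising set to be $F$ itself, which lies in $\Sigma_\la \preccurlyeq \id$). Hence $\mathcal{B}_{\Sigma_\la}$ coincides with the family of closed sets, and a fortiori is a closed basis.

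For the reverse direction, the key preliminary observation is that $\mathcal{B}_{\Sigma_\la}$ is \emph{always} closed under arbitrary intersections, without any appeal to (\ref{hkp}). Indeed, for any $X_\alpha \preccurlyeq \id$ with $\alpha\in \Lambda$, a direct computation yields
\begin{equation*}
\bigcap_{\alpha\in \Lambda} X_\alpha\ua \;=\; \left\{J\in\Sigma_\la \mid K_0\preccurlyeq J\right\} \;=\; \{K_0\}\ua,\qquad K_0 := \bigvee_{\alpha\in \Lambda} \mathcal{D}_{X_\alpha} \in \id.
\end{equation*}
Now suppose $\mathcal{B}_{\Sigma_\la}$ is a closed basis of the generated topology. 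Then every finite union of basis elements is closed, hence an intersection of basis elements, and by the observation above itself lies in $\mathcal{B}_{\Sigma_\la}$. Specialising to singleton parameters, for all $I, J \in \id$ there exists $K \in \id$ with $I\ua \cup J\ua = K\ua$. Since $I \in I\ua \subseteq K\ua$ and $J \in J\ua \subseteq K\ua$, one has $K \preccurlyeq I$ and $K \preccurlyeq J$, hence $K \preccurlyeq I \wedge J$. For any $S \in \Sigma_\la$ with $I \wedge J \preccurlyeq S$, this gives $K \preccurlyeq S$, so $S \in K\ua = I\ua \cup J\ua$, whence $I \preccurlyeq S$ or $J \preccurlyeq S$. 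This is precisely property (\ref{hkp}), and Theorem \ref{hkt} then delivers that $(-)\ua$ is Kuratowski.

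The main technical point to handle carefully is the reverse direction: one must observe that the arbitrary-intersection closure of $\mathcal{B}_{\Sigma_\la}$ holds independently of (\ref{hkp}), so that the finite-union closure forced by being a basis can be upgraded via singletons to the pointwise condition (\ref{hkp}).
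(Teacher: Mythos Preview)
Your forward direction is fine and coincides with (a more explicit version of) the paper's one-line observation.

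The reverse direction, however, contains a genuine gap. You write ``Since $I\in I\ua\subseteq K\ua$\,'' for arbitrary $I\in\id$, but by definition $I\ua=\{J\in\Sigma_\la\mid I\preccurlyeq J\}\subseteq\Sigma_\la$, so $I\in I\ua$ holds \emph{only when} $I\in\Sigma_\la$. Thus your argument establishes at best the restricted version of (\ref{hkp}) with $I,J\in\Sigma_\la$, and this does not feed into Theorem~\ref{hkt}, which is stated for all $I,J\in\id$. Nor does the restricted (\ref{hkp}) directly yield the Kuratowski union axiom, since there one needs $(\mathcal D_X\wedge\mathcal D_Y)\ua\subseteq \mathcal D_X\ua\cup\mathcal D_Y\ua$ and $\mathcal D_X,\mathcal D_Y$ need not lie in $\Sigma_\la$.

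Your core idea is salvageable if you abandon the detour through (\ref{hkp}) and verify the Kuratowski union axiom directly on subsets $X,Y\subseteq\Sigma_\la$: from your intersection observation, $X\ua\cup Y\ua=K\ua$ for some $K\in\id$; now the \emph{valid} inclusion $X\subseteq X\ua\subseteq K\ua$ gives $K\preccurlyeq L$ for every $L\in X$, hence $K\preccurlyeq\mathcal D_X$, and similarly $K\preccurlyeq\mathcal D_Y$; therefore $K\preccurlyeq\mathcal D_X\wedge\mathcal D_Y=\mathcal D_{X\cup Y}$, which yields $(X\cup Y)\ua\subseteq K\ua=X\ua\cup Y\ua$. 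By contrast, the paper avoids all of this: it uses the general fact that for a closed basis $\mathcal B$ one has $\mathcal C(T)=\bigwedge\{B\in\mathcal B\mid T\preccurlyeq B\}$, and then shows directly that $\mathcal C(T)=T\ua$ for every $T\subseteq\Sigma_\la$, which is cleaner and sidesteps the membership issue entirely.
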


\begin{proof}      
If $(-)\ua$ is a Kuratowski closure operator on $\Sigma_{\la}$, then obviously $\mathcal{B}_{\Sigma_{\la}}$ is a closed basis of the corresponding hull-kernel topology. 
To prove the converse, we shall use the following fact: if $\mathcal{B}$ is a closed basis on a topological space $X$, then the  closure operator induced by the topology  is expressible in terms of $\mathcal{B}$, in the sense that 
\[
\mathcal{C}(T)={\wedge }\{B\in \mathcal{B}\mid T\preccurlyeq B\}, \] for any $T\preccurlyeq  X$.
Suppose that $\mathcal{B}_{\Sigma_{\la}}$ is a closed basis for $\Sigma_{\la}$. Let $\mathcal{C}(\cdot)$ be the desired Kuratowski closure operator. It suffices to show that 	$(\cdot)\ua=\mathcal{C}(\cdot)$. For any $T\preccurlyeq \Sigma_{\la}$, we obtain
\[ 
\mathcal{C}(T)=\bigwedge \left\{S\ua\in \mathcal{B}_{\Sigma_{\la}}\mid  T\preccurlyeq S\ua\right\}.
\]
Since $T\ua\in \mathcal{B}_{\Sigma_{\la}}$ and $T\preccurlyeq T\ua$, we deduce from  the equality $(T\ua)\ua=T\ua$ that $\mathcal{C}(T)\preccurlyeq T\ua$. To have the reverse inclusion, let $J\in T\ua$. Consider any $S\preccurlyeq\Sigma_{\la}$ with $T\preccurlyeq S\ua$. Then, we have 
\[
\mathcal{D}_S=\wedge S\ua\preccurlyeq \mathcal{D}_T.
\]
Since  $\mathcal{D}_T\preccurlyeq J$, as $J\in T\ua$, it follows that $\mathcal{D}_S\preccurlyeq J$. Thus, $J\in S\ua$.  In all then, we have shown that $J$ belongs to every set included in the intersection that describes $\mathcal{C}(T)$, and so $J\in \mathcal{C}(T)$, and so $T\ua\preccurlyeq \mathcal{C}(T)$. Consequently, $T\ua=\mathcal{C}(T)$, and  hence, $(\cdot)\ua$ is a Kuratowski closure operator.	
\end{proof}

\begin{remark}
From Theorem \ref{kclcb}, we can conclude that if $(-)\ua$ is not a Kuratowski closure operator, then $\mathcal{B}_{\Sigma_{\la}}$ cannot serve as a closed basis for a topology on a class $\Sigma_{\la}$ of ideals in $\la$. Therefore, our only option is to utilize $\mathcal{B}_{\Sigma_{\la}}$ as a closed subbasis to generate a topology on $\Sigma_{\la}$.	
\end{remark}

Let $\la$ be a quantale and $\Sigma_{\la}$ be a class of ideals in $\la$. A \emph{quantale topology} on $\Sigma_{\la}$, denoted by $\tau_{\la}$, is defined as the topology generated by the sets of the form (\ref{sbcs}) as a subbasis of closed sets. When the class $\Sigma_{\la}$ of ideals is equipped with this quantale topology, it is referred to as a \emph{quantale space}.
For the sake of brevity, a quantale  space $(\Sigma_{\la}, \tau_{\la})$ will be denoted simply as $\Sigma_{\la}$, employing the same notation as the underlying set of the topological space.

\begin{remark}
(1)
The concept we have referred to as quantale topology  is also known by other names in different contexts. In the context of general lattices, it is referred to as coarse lower topology (see \cite[A.8, p.\,589]{DST19}) or lower topology (see \cite[Definition O-5.4, p.\,43]{G.al.03}). In the context of rings, it is called as ideal topology (see \cite{DG22}).
	
(2) From Theorem \ref{kclcb}, it follows that a quantale topology coincides with a Zariski topology or a Stone topology when the underlying class of ideals is respectively prime or maximal. Furthermore, in a more general setting, a quantale topology can be viewed as a hull-kernel topology whenever the underlying set is the class of strongly irreducible ideals or any subclass thereof.
\end{remark}

\smallskip 
\section{Quasi-compactness}\label{qct}
\smallskip 

Recall that a topological space is said to be \emph{quasi-compact} if it satisfies the finite intersection property (or, equivalently, every open cover has a finite subcover), without assuming the space to be $T_2$. It is well-known that spectrum of prime ideals of a ring endowed with Zariski topology is quasi-compact. We first provide a similar result encompassing more classes of ideals of a quantale.  However, the proof of this result relies on the Alexander's subbasis theorem.

\begin{theorem}\label{csb}
Let $\la$ be a quantale. If  \,$\Sigma_{\la}$ is a class of ideals in $\la$ with the property that $\mathrm{Max}_\la\preccurlyeq \Sigma_{\la},$ then the   quantale space $\Sigma_{\la}$  quasi-compact. 
\end{theorem}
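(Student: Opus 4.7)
The plan is to invoke Alexander's subbasis theorem: it is enough to verify that every family of subbasic closed sets with empty intersection admits a finite subfamily already with empty intersection. Since the subbasic closed sets of the quantale topology are exactly the sets $X\ua$ with $X\preccurlyeq \id$ and $X\ne\emptyset$, I start with an arbitrary family $\{X_\lambda\ua\}_{\lambda\in\Lambda}$ with $\bigwedge_{\lambda\in\Lambda} X_\lambda\ua=\emptyset$ and seek a finite $F\preccurlyeq\Lambda$ with $\bigwedge_{\lambda\in F} X_\lambda\ua=\emptyset$.

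The guiding idea is to translate the intersection condition into a statement about the ideal $I:=\bigvee_{\lambda\in\Lambda}\mathcal{D}_{X_\lambda}$ of $\la$. The claim is that $I=\la$. Indeed, suppose for contradiction that $I$ is proper. Then by a standard Zorn's lemma argument $I$ sits inside some maximal ideal $M\in\mx$. The hypothesis $\mx\preccurlyeq\Sigma_\la$ places $M$ in the quantale space, and from $\mathcal{D}_{X_\lambda}\preccurlyeq I\preccurlyeq M$ for every $\lambda$ we get $M\in X_\lambda\ua$ for every $\lambda$, contradicting $\bigwedge_\lambda X_\lambda\ua=\emptyset$. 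Hence $I=\la$, so $\top\in I$.

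From the definition of the join in $\id$, the relation $\top\in\bigvee_{\lambda\in\Lambda}\mathcal{D}_{X_\lambda}$ yields a finite subset $F\preccurlyeq\Lambda$ and elements $x_\lambda\in\mathcal{D}_{X_\lambda}$ with $\top\preccurlyeq\bigvee_{\lambda\in F}x_\lambda$, whence $\top\in\bigvee_{\lambda\in F}\mathcal{D}_{X_\lambda}$; consequently $\bigvee_{\lambda\in F}\mathcal{D}_{X_\lambda}=\la$. Now if some $J\in\Sigma_\la$ were to lie in $\bigwedge_{\lambda\in F}X_\lambda\ua$, then $\mathcal{D}_{X_\lambda}\preccurlyeq J$ for each $\lambda\in F$, forcing $\la=\bigvee_{\lambda\in F}\mathcal{D}_{X_\lambda}\preccurlyeq J$; but ideals in $\Sigma_\la\preccurlyeq\idp$ are proper, a contradiction. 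Therefore $\bigwedge_{\lambda\in F}X_\lambda\ua=\emptyset$, completing the argument via Alexander's theorem.

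The main substantive step is the passage from $I=\la$ to the finiteness of $F$, which relies crucially on the explicit description of joins in $\id$ (finite joins of representatives) together with the fact that $\top$ is the multiplicative identity and the maximal element. The role of the hypothesis $\mx\preccurlyeq\Sigma_\la$ is confined to the Zorn step, and it is exactly this ingredient that prevents the result from extending to arbitrary classes $\Sigma_\la$.
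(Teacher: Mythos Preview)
Your proof is correct and follows essentially the same route as the paper's: both invoke Alexander's subbasis theorem, use the hypothesis $\mx\preccurlyeq\Sigma_\la$ together with a Zorn-type argument to force the relevant join of ideals to equal $\la$, and then extract a finite subfamily from the fact that $\top$ lies in a join of ideals. Your formulation via $I=\bigvee_\lambda\mathcal{D}_{X_\lambda}$ is in fact slightly cleaner than the paper's, which works with $\bigvee_\lambda X_\lambda$ directly; you also spell out explicitly why the finite intersection is empty (using $\Sigma_\la\preccurlyeq\idp$), a step the paper leaves implicit.
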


\begin{proof}
Suppose  $\{\mathcal K_{ \lambda}\}_{\lambda \in \Lambda}$ is a family of subbasic closed sets of $\Sigma_{\la}$  with $\bigwedge_{\lambda\in \Lambda}\mathcal K_{ \lambda}=\emptyset.$ This implies  that
$\mathcal K_{ \lambda}=X_{\lambda}^{\uparrow}$,  for all $\lambda \in \Lambda$ and $X_{\lambda}\preccurlyeq \Sigma_{\la}$  such that
\[\emptyset=\bigwedge_{\lambda \in \Lambda}X_{\lambda}^{\uparrow}=\left(\bigvee_{\lambda \in \Lambda} X_{\lambda}  \right)^{\uparrow}.\]
The second equality indeed holds. For
\[J\in \left(\bigvee_{\lambda \in \Lambda} X_{\lambda}  \right)^{\uparrow} \Leftrightarrow\mathcal{D}_{\bigvee_{\lambda \in \Lambda} X_{\lambda}} \preccurlyeq J\Leftrightarrow \forall \lambda \in \Lambda,\mathcal{D}_{ X_{\lambda}}\preccurlyeq J\Leftrightarrow \forall \lambda \in \Lambda, J\in X_{\lambda}\ua\Leftrightarrow J\in \bigwedge_{\lambda \in \Lambda}X_{\lambda}^{\uparrow}.\]
If $\bigvee_{\lambda \in \Lambda} X_{\lambda}  \neq \la,$ then  there exists $M\in \mathrm{Max}_\la$ such that $\bigvee_{\lambda \in \Lambda} X_{\lambda}  \preccurlyeq M.$ Moreover, \[ \mathcal{D}_{ X_{\lambda}} \preccurlyeq X_{\lambda}  \preccurlyeq \bigvee_{\lambda \in \Lambda} X_{\lambda}  \preccurlyeq M,\]
for all $\lambda \in \Lambda.$ Therefore, by hypothesis, $M\in X_{\lambda}^{\uparrow}=\mathcal K_{\lambda},$  for all $\lambda \in \Lambda$, a contradiction. Hence $\bigvee_{\lambda \in \Lambda} X_{\lambda}=\la,$ and thus $\top\in \bigvee_{\lambda \in \Lambda} X_{\lambda}.$ This implies the existence of a finite subset $\{\lambda_{\scriptscriptstyle 1}, \ldots, \lambda_{\scriptscriptstyle n}\}$ of $\Lambda$ such that $\top= \bigvee_{i=1}^n x_{\lambda_i},$ for some  $x_{\lambda_i}\in X_{\lambda_i}$. Hence  $\la= \bigvee_{i=1}^n X_{\lambda_i},$ which establishes the finite intersection property. Therefore, $\Sigma_{\la}$ is quasi-compact by Alexander's subbasis theorem.
\end{proof} 

\begin{corollary}\label{ciqc}
The quantale spaces of proper, maximal, prime, radical, primary, strongly irreducible, and irreducible ideals are all quasi-compact.
\end{corollary}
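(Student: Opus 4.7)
The plan is to apply Theorem \ref{csb} uniformly to all seven classes. Since that theorem says any class $\Sigma_\la$ of ideals with $\mx \preccurlyeq \Sigma_\la$ yields a quasi-compact quantale space, the whole corollary reduces to verifying a single containment for each listed class: every maximal ideal lies in it. I would state this organizing principle first, and then treat the seven classes in an order that lets each case feed the next.

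The containments $\mx \preccurlyeq \idp$ and $\mx \preccurlyeq \mx$ are immediate from the definition of maximality. For prime, I would invoke the standard fact (available from \cite{Gos23'}) that in a commutative unital quantale every maximal ideal is prime, so $\mx \preccurlyeq \spc$. The remaining four classes will then be handled by checking that every prime ideal sits in each of them, using only the quantale axioms recalled in Section \ref{prlm}. Concretely: prime is primary on the nose (take $n=1$ in the primary condition); prime is radical, since $x^n \in P$ together with primality forces $x \in P$ by an easy induction; and for strongly irreducible I would use monotonicity of $\pr$ (a consequence of axiom (\ref{jdp})) to note that $x \pr y \preccurlyeq x \wedge y$, so if $J \wedge J' \preccurlyeq P$ and $x \in J \setminus P$, then for any $y \in J'$ we get $x \pr y \preccurlyeq x \wedge y \in P$, hence $y \in P$ by primality and downward closure, giving $J' \preccurlyeq P$. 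Finally, strongly irreducible implies irreducible trivially, because $J \wedge J' = I$ is a special case of $J \wedge J' \preccurlyeq I$.

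Assembling these implications gives $\mx \preccurlyeq \Sigma_\la$ for each of the seven classes, so Theorem \ref{csb} applies in every case. The only step that is not entirely mechanical is the inequality $x \pr y \preccurlyeq x \wedge y$ used in the prime-to-strongly-irreducible step; I would justify it explicitly by observing that axiom (\ref{jdp}) forces $\pr$ to be monotone in each argument and then using $y \preccurlyeq \top$ together with axiom (\ref{mid}) to conclude $x \pr y \preccurlyeq x$, and symmetrically $x \pr y \preccurlyeq y$. I do not anticipate a genuine obstacle here: the whole argument is a routine bookkeeping exercise built around Theorem \ref{csb} and the containments among the standard classes of ideals, all of which are already in the toolkit developed in Section \ref{prlm} and in \cite{Gos23'}.
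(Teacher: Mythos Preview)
Your proposal is correct and follows exactly the route the paper intends: the corollary is stated without proof immediately after Theorem~\ref{csb}, so the implicit argument is precisely the one you give, namely verifying $\mx \preccurlyeq \Sigma_{\la}$ for each listed class and invoking that theorem. You have simply supplied the routine details the paper omits.
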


While Theorem \ref{csb} presents a sufficient condition for a quantale space to possess quasi-compactness, the subsequent proposition delivers a characterization of these spaces. However, before proceeding with the proposition, we need to establish a prerequisite result from the domain of topological spaces.

\begin{lemma}\label{upper-bound}
If $X$ is a quasi-compact, T$_0$-space, then every chain in $X$ has an upper bound. 
\end{lemma}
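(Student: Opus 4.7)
The plan is to exploit the specialization (pre)order on $X$ together with a simple finite-intersection-property argument. With the convention $x \leq y$ iff $y \in \overline{\{x\}}$ (equivalently, $\overline{\{y\}} \subseteq \overline{\{x\}}$), the $T_0$ hypothesis upgrades $\leq$ from a preorder to a genuine partial order, so ``chain'' and ``upper bound'' in the statement are unambiguous notions rather than notions defined only up to topological indistinguishability.

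The key observation is that the chain $\{x_\alpha\}_{\alpha \in \Lambda}$ in $X$ translates, under $x_\alpha \mapsto \overline{\{x_\alpha\}}$, into a totally-ordered-by-inclusion family of nonempty closed subsets of $X$. Any chain of nonempty closed sets automatically has the finite intersection property, because any finite subfamily, being itself totally ordered by inclusion, has a smallest member, which is nonempty (it contains the corresponding $x_\alpha$). Invoking quasi-compactness of $X$ in its finite-intersection-property form then forces $\bigcap_{\alpha \in \Lambda} \overline{\{x_\alpha\}} \neq \emptyset$.

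To conclude, I would pick any $y$ in this intersection. Since $y \in \overline{\{x_\alpha\}}$ for every $\alpha$, the definition of the specialization order gives $x_\alpha \leq y$ for every $\alpha$, exhibiting $y$ as the desired upper bound of the chain.

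The only real point of care is the bookkeeping of which direction the specialization order matches the inclusion order of closures (two mirror-image conventions appear in the literature); once the convention is fixed the argument above is routine. There is no serious obstacle: the substantive content of the lemma is exactly quasi-compactness via FIP applied to the chain of closures, with $T_0$ doing nothing more than making the chain/upper-bound language literal.
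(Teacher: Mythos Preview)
Your proof is correct and is essentially the same as the paper's: the paper also takes the family of closures $\{\mathcal{C}(k)\mid k\in K\}$, notes it is a chain of nonempty closed sets hence has the finite intersection property, applies quasi-compactness to get a point $z$ in the intersection, and concludes $k\preccurlyeq z$ for all $k$. Your remarks on the role of $T_0$ and on the specialisation-order convention are more explicit than the paper's, but the argument itself coincides.
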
 

\begin{proof}
Let $K\preccurlyeq X$ be a chain and let $\Omega:=\left\{\mathcal {C}(k)\mid k\in K \right\}$. Then $\mathcal G$ is clearly a chain of nonempty closed sets of $X$ and thus it has the finite intersection property. Since $X$ is quasi-compact, there is a point $z\in \bigwedge \Omega$ and, by definition, $k\preccurlyeq z$, for every $k\in K$, and this proves the result. 
\end{proof}

\begin{proposition}\label{cqc}
Let $\Sigma_{\la}$ be a class of ideals of a quantale $\la$. Suppose $\mathcal{M}(\Sigma_{\la})$ denotes the set of maximal elements of $\Sigma_{\la}$. Then the following are equivalent: 
\begin{enumerate}
\item $\Sigma_{\la}$ is quasi-compact;
		
\item for every $I\in\Sigma_{\la},$ there exists a $J\in\mathcal{M}(\Sigma_{\la})$ such that $I\preccurlyeq J$, and $\mathcal{M}(\Sigma_{\la})$ is quasi-compact.
\end{enumerate}
\end{proposition}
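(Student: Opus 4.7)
The plan rests on a key observation about the quantale topology: for any $I \in \Sigma_\la$, the closure of $\{I\}$ equals $I\ua = \{J \in \Sigma_\la : I \preccurlyeq J\}$. Since $I\ua$ is a subbasic closed set containing $I$, certainly $\overline{\{I\}} \subseteq I\ua$. Conversely, any closed set $F \ni I$ is an intersection of finite unions of subbasic closed sets, and whenever $I \in X\ua$ (i.e.\ $\mathcal{D}_X \preccurlyeq I$), any $J$ with $I \preccurlyeq J$ also satisfies $\mathcal{D}_X \preccurlyeq J$, so $J \in X\ua$; hence $J$ lies in every such $F$, giving $\overline{\{I\}} \supseteq I\ua$. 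In particular the specialization order on $\Sigma_\la$ coincides with $\preccurlyeq$, and $\Sigma_\la$ is $T_0$, since for distinct $I \neq J$ with, say, $I \not\preccurlyeq J$, the closed set $I\ua$ separates them.

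For (1) $\Rightarrow$ (2): being quasi-compact and $T_0$, Lemma \ref{upper-bound} applies to $\Sigma_\la$, so every $\preccurlyeq$-chain in $\Sigma_\la$ has an upper bound. Zorn's lemma then yields, for each $I \in \Sigma_\la$, some $J \in \mathcal{M}(\Sigma_\la)$ with $I \preccurlyeq J$. For quasi-compactness of $\mathcal{M}(\Sigma_\la)$, I would take a family $\{G_\lambda\}$ of sets open in $\Sigma_\la$ whose union $U$ covers $\mathcal{M}(\Sigma_\la)$. Given any $I \in \Sigma_\la$, fix $M \in \mathcal{M}(\Sigma_\la)$ with $I \preccurlyeq M$, so that $M \in \overline{\{I\}}$; were $I \notin U$, the closed set $\Sigma_\la \setminus U$ would contain $I$ and hence $\overline{\{I\}}$, forcing $M \notin U$, a contradiction. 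So $U = \Sigma_\la$, and quasi-compactness of $\Sigma_\la$ extracts a finite subfamily covering $\Sigma_\la$, which a fortiori covers $\mathcal{M}(\Sigma_\la)$.

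For (2) $\Rightarrow$ (1): let $\{G_\lambda\}$ be an open cover of $\Sigma_\la$; it restricts to an open cover of $\mathcal{M}(\Sigma_\la)$, from which quasi-compactness of $\mathcal{M}(\Sigma_\la)$ extracts a finite subfamily $G_{\lambda_1},\ldots, G_{\lambda_n}$. For arbitrary $I \in \Sigma_\la$, I use the first part of (2) to fix $M \in \mathcal{M}(\Sigma_\la)$ with $I \preccurlyeq M$, pick $G_{\lambda_i} \ni M$, and apply the same open-neighbourhood-of-specialization argument (if $I$ were outside $G_{\lambda_i}$, then $M \in \overline{\{I\}} \subseteq \Sigma_\la \setminus G_{\lambda_i}$) to conclude $I \in G_{\lambda_i}$. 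Thus the finite subfamily already covers $\Sigma_\la$.

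The principal technical hurdle is justifying the closure identity $\overline{\{I\}} = I\ua$, since $\mathcal{B}_{\Sigma_\la}$ is in general only a subbasis of closed sets, not a basis (this is precisely the content of Theorems \ref{hkt} and \ref{kclcb}); the inclusion $\overline{\{I\}} \supseteq I\ua$ requires unpacking the general form of a closed set and propagating membership along the ideal order, as above. Once this identity and the resulting $T_0$ property are in hand, both implications reduce to the standard transfer of quasi-compactness between a space and its set of maximal specialization points.
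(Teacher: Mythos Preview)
Your proof is correct and follows essentially the same strategy as the paper's: both directions hinge on the fact that open sets in the quantale topology are downward closed under $\preccurlyeq$ (equivalently, $\overline{\{I\}}=I\ua$), together with Lemma~\ref{upper-bound} and Zorn's lemma for the existence of maximal elements. The only cosmetic differences are that you front-load and prove the closure identity explicitly (the paper establishes it later as Lemma~\ref{irrc} and uses it implicitly in the line ``Then, $I\in O$''), and that you apply Zorn directly to $\Sigma_{\la}$ rather than to the closed subspace $I\ua\wedge\Sigma_{\la}$ --- both routes yield the same maximal elements.
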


\begin{proof}
To show (2)$\Rightarrow$(1), let $\mathcal{O}$ be an open cover of $\Sigma_{\la}$. Then, $\mathcal{O}$ also covers $\mathcal{M}(\Sigma_{\la})$, and thus there is a finite subcover $\mathcal{O}'$ of $\mathcal{M}(\Sigma_{\la})$. Consider $I\in\Sigma_{\la}$. By hypothesis, there is $J\in\mathcal{M}(\Sigma_{\la})$ such that $I\preccurlyeq J$, and $O\in\mathcal{O}'$ such that $J\in O$. Then, $I\in O$, and thus $\mathcal{O}'$ is also a finite subcover for $\Sigma_{\la}$. Hence, $\Sigma_{\la}$ is quasi-compact.
Next we prove that (1)$\Rightarrow$(2). Suppose that $I\in\Sigma_{\la}$, and consider \[\Sigma_{\la}':=I^\uparrow\wedge \Sigma_{\la}.\] Then, $\Sigma_{\la}'$ is a closed subset of $\Sigma_{\la}$, and thus it is quasi-compact; by Lemma \ref{upper-bound}, every ascending chain in $\Sigma_{\la}'$ is bounded, and hence by Zorn's Lemma $\Sigma_{\la}'$ has maximal elements, \textit{i.e.}, $I\preccurlyeq J,$ for some $J\in\mathcal{M}(\Sigma_{\la})$.
Now, suppose $\mathcal{O}$ is an open cover of $\mathcal{M}(\Sigma_{\la})$. Then, $\mathcal{O}$ is also an open cover of $\Sigma_{\la}$, and thus it admits a finite subcover, which will be also a finite subcover of $\mathcal{M}(\Sigma_{\la})$. Thus, $\mathcal{M}(\Sigma_{\la})$ is quasi-compact.
\end{proof}

\begin{remark}
While  Theorem \ref{csb} can now be derived from Proposition \ref{cqc}, it is worth noting that utilizing Theorem \ref{csb} simplifies the verification of quasi-compactness for a quantale space (see Corollary \ref{ciqc}).
\end{remark}

In Theorem \ref{csb}, we have observed that the inclusion of all maximal ideals in a class  $\Sigma_{\la}$ of ideals
is a sufficient condition for the quantale space $\Sigma_{\la}$
to possess quasi-compactness. In the subsequent proposition, we establish that this inclusion is also a necessary condition for the class of finitely generated (proper) ideals.

\begin{proposition}
Let $\la$ be a quantale. If the quantale space $\Sigma_{\la}$ of finitely generated proper ideals is quasi-compact, then $\mx\preccurlyeq\Sigma_{\la}$.
\end{proposition}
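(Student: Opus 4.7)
The plan is to argue by contradiction: assume some $M \in \mx$ is not finitely generated, so $M \notin \Sigma_\la$. I would construct an open cover of $\Sigma_\la$ indexed by the elements of $M$, each of whose members is a subbasic open set, and then show that no finite subfamily can cover $\Sigma_\la$; this contradicts quasi-compactness.

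Concretely, for each $y \in M$ I would take
\[
W_y \;:=\; \Sigma_\la \setminus \{\langle y\rangle\}\ua \;=\; \{J \in \Sigma_\la \mid y \notin J\},
\]
a subbasic open set of $\Sigma_\la$. The claim to verify is that $\{W_y\}_{y \in M}$ covers $\Sigma_\la$. Fix $J \in \Sigma_\la$. If $J \preccurlyeq M$, then $J \neq M$ (since $J$ is finitely generated while $M$ is not), so $J \subsetneq M$ and one may pick any $y \in M \setminus J$. If $J \not\preccurlyeq M$, then by maximality of $M$ and properness of $J$ one cannot have $M \preccurlyeq J$ either (otherwise $J = M$ or $J = \la$, both impossible), so $M \not\preccurlyeq J$ and again there is some $y \in M \setminus J$. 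In either case $J \in W_y$.

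Quasi-compactness then yields a finite subcover $W_{y_1}, \ldots, W_{y_n}$ with each $y_i \in M$. I would finish by considering $I := \langle y_1, \ldots, y_n \rangle$: it is finitely generated, and since each $y_i \in M$ we have $I \preccurlyeq M$, so $I$ is proper and hence $I \in \Sigma_\la$. But each $y_i$ is a generator of $I$, so $I \notin W_{y_i}$ for every $i$, contradicting that $\{W_{y_i}\}_{i=1}^n$ covers $\Sigma_\la$. The main (and essentially only) obstacle is the second case in the cover verification; the trick is that maximality of $M$ together with properness of every $J \in \Sigma_\la$ rules out $M \preccurlyeq J$, which is what allows a family indexed solely by elements of $M$ to reach every point of $\Sigma_\la$ rather than only those ideals lying beneath $M$.
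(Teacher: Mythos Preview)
Your proof is correct and essentially identical to the paper's: the paper works with the family of subbasic closed sets $\mathcal{K}=\{\langle x\rangle^\uparrow \wedge \Sigma_\la \mid x\in M\}$, shows $\bigwedge\mathcal{K}=\emptyset$ via the same maximality argument you give, and asserts that $\mathcal{K}$ has the finite intersection property (the witness being exactly your $I=\langle y_1,\ldots,y_n\rangle$). The only difference is that you phrase it in terms of open covers rather than the finite intersection property, and your case split in the covering verification is a bit roundabout---one can argue directly that $M\preccurlyeq J$ forces $J=M\notin\Sigma_\la$, so $M\not\preccurlyeq J$ for every $J\in\Sigma_\la$---but the substance is the same.
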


\begin{proof}
Suppose that $M\in \mx$ with the property that $M$ is not finitely generated. We show that $\Sigma_{\la}$ is not quasi-compact. Let us consider the collection of closed subspaces: \[\mathcal K=\left\{\langle x\rangle^\uparrow\wedge \Sigma_{\la}\mid x\in M \right\}.\]  We claim that $\bigwedge \mathcal K=\emptyset$. If not, let $I\in \bigwedge \mathcal K$. Then $I$ is finitely generated and $M\preccurlyeq I$. Since $M$ is not a finitely generated ideal, we must have $I\neq M,$ and that implies $I=\la$, which contradicts the fact that $\Sigma_{\la}$ is proper. But clearly $\mathcal K$ has the finite intersection property, and hence, $\Sigma_{\la}$ is not quasi-compact.
\end{proof}

Let us now explore the characterization of Noetherian quantales in terms of quasi-compact quantale spaces. 

\begin{theorem}
A quantale $\la$ is Noetherian if and only if every quantale space $\Sigma_{\la}$ is quasi-compact.
\end{theorem}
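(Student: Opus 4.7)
The plan is to prove the forward direction via Alexander's subbasis theorem (as in Theorem \ref{csb}), using that every ideal in a Noetherian quantale is finitely generated, and the backward direction by exhibiting an explicit chain-based open cover admitting no finite subcover.

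For the forward direction, I would first record the standard consequence of ACC that every ideal $I$ of $\la$ is finitely generated: the family of finitely generated sub-ideals of $I$ has a maximal element by ACC, and any strict sub-ideal can be enlarged by adjoining a further generator, so that maximal element must equal $I$. Next, fix a class $\Sigma_{\la}$ and a family $\{X_{\lambda}\ua\}_{\lambda \in \Lambda}$ of subbasic closed sets with $\bigwedge_{\lambda \in \Lambda} X_{\lambda}\ua = \emptyset$. As in the proof of Theorem \ref{csb}, this emptiness is equivalent to the ideal
\[
K := \bigvee_{\lambda \in \Lambda} \mathcal{D}_{X_{\lambda}}
\]
not being contained in any $J \in \Sigma_{\la}$. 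Choose finitely many generators $a_{1}, \ldots, a_{n}$ of $K$; by the definition of the join of ideals, each $a_{i}$ lies below a finite join of elements drawn from finitely many of the $\mathcal{D}_{X_{\lambda}}$. Gathering these finite index sets across $i = 1, \ldots, n$ into one finite $F \subseteq \Lambda$ yields $a_{1}, \ldots, a_{n} \in \bigvee_{\lambda \in F} \mathcal{D}_{X_{\lambda}}$, hence $K = \bigvee_{\lambda \in F} \mathcal{D}_{X_{\lambda}}$, and consequently $\bigwedge_{\lambda \in F} X_{\lambda}\ua = \emptyset$. Alexander's subbasis theorem then delivers quasi-compactness of $\Sigma_{\la}$.

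For the converse, I would argue contrapositively. Assume $\la$ is not Noetherian and extract a strictly ascending chain $I_{1} \subsetneq I_{2} \subsetneq \cdots$; properness of each $I_{n}$ is automatic, since once an ideal equals $\la$ the chain cannot strictly ascend any further. Put $\Sigma_{\la} := \{I_{n} : n \in \mathds{N}\}$ with its quantale topology. Since $I_{n} \preccurlyeq I_{k}$ exactly when $k \geq n$, the subbasic open set $U_{n} := \Sigma_{\la} \setminus I_{n}\ua$ equals $\{I_{1}, \ldots, I_{n-1}\}$, and the family $\{U_{n}\}_{n \in \mathds{N}}$ covers $\Sigma_{\la}$ (because $I_{k} \in U_{k+1}$). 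However, the $U_{n}$ form a nested increasing chain of finite sets, so no finite subfamily can cover the infinite space $\Sigma_{\la}$; hence $\Sigma_{\la}$ is not quasi-compact.

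The main obstacle is the compression step in the forward direction: translating finite generation of the single ideal $K = \bigvee_{\lambda \in \Lambda} \mathcal{D}_{X_{\lambda}}$ into a finite subindex $F$ for which already $K = \bigvee_{\lambda \in F} \mathcal{D}_{X_{\lambda}}$. Once this reduction is in hand, Alexander's subbasis theorem closes the argument mechanically, and the converse is a matter of pure bookkeeping on the explicit chain.
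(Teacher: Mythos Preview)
Your proposal is correct. The forward direction is essentially identical to the paper's: both use that Noetherian implies every ideal is finitely generated, form the join $K=\bigvee_{\lambda}\mathcal{D}_{X_{\lambda}}$, pick finitely many generators, and compress to a finite index set $F$ with $K=\bigvee_{\lambda\in F}\mathcal{D}_{X_{\lambda}}$ before invoking Alexander's subbasis theorem.

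The converse, however, is organized differently. The paper does not use the ascending chain directly; instead it takes a single ideal $I$ that is \emph{not} finitely generated and considers
\[
\Sigma_{\la}:=\{J\in\id\mid J\text{ is finitely generated and }J\preccurlyeq I\},
\]
showing that the family $\{\Sigma_{\la}\wedge\langle i\rangle^{\uparrow}\}_{i\in I}$ has the finite intersection property (witnessed by $\langle i_{1},\ldots,i_{n}\rangle$) but empty intersection (any common member would be finitely generated yet contain $I$). Your chain construction $\Sigma_{\la}=\{I_{n}:n\in\mathds{N}\}$ with the open cover $U_{n}=\{I_{1},\ldots,I_{n-1}\}$ is equally valid and arguably more transparent, since it reads off non-quasi-compactness directly from the failure of ACC without passing through the ``not finitely generated'' reformulation. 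The paper's witness class has the mild advantage of being intrinsically defined (no choice of chain needed), but both routes are short and elementary.
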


\begin{proof}
To obtain the claim, it suffices to show that $\la$ is Noetherian if and only if $\id$ is a Noetherian quantale space. 
We show that every subclass $\Sigma_{\la}$ of $\id$ is quasi-compact. Consider a collection \[\mathcal L:=\left\{\Sigma_{\la}\wedge I_{\lambda}^\uparrow  \right\}_{\lambda\in \Lambda}\] of subbasic closed sets of $\Sigma_{\la}$ with the finite intersection property. Since $\la$ is Noetherian, the ideal $J:=\bigvee_{\lambda\in \Lambda}I_{\lambda}$ is finitely generated, and assume that $J=\langle \alpha_1,\ldots,\alpha_n\rangle$. For every $1\leqslant j\leqslant n$, there exists a finite subset $\Omega_j$ of $\Lambda$ such that $\alpha_j\in \bigvee_{\lambda\in \Omega_j}I_{\lambda}$. If $\Omega:=\bigcup\limits_{j=1}^n\Omega_j$, it immediately follows that $J=\bigvee_{\omega\in \Omega}I_{\omega}$. Hence we have
\[
\bigwedge\, \mathcal L=\Sigma_{\la}\wedge J^\uparrow=\Sigma_{\la}\wedge \left\{\bigvee_{\omega\in \Omega}I_{\omega}\right\}^\uparrow= \bigwedge_{\omega\in \Omega}\left(\Sigma_{\la}\wedge I_{\omega} ^\uparrow\right)\neq \emptyset,
\]
since $H$ is finite and $\mathcal L$ has the finite intersection property. Then the claim follows by Alexander's subbasis Theorem. 
Conversely, let $\id$ be a Noetherian space and that there exists an ideal $I\in \id$ that is not finitely generated. Then the subspace \[\Sigma_{\la}:=\{J\in \id\mid  J \;\text{is finitely generated and}\; J\preccurlyeq I \}\] is not quasi-compact. 
In fact, the collection of closed sets \[\mathcal L:=\left\{\Sigma_{\la}\wedge \langle i\rangle^\uparrow \right\}_{i\in I}\] of $\Sigma_{\la}$ clearly has the finite intersection property, but has empty intersection. 
\end{proof} 

In the proposition that follows, we present a sufficient condition for  the quasi-compactness of the quantale space of regular ideals in $\la$.

\begin{proposition}
If $\la$ is a local quantale, then the quantale space of regular ideals is quasi-compact.
\end{proposition}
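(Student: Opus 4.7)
The strategy is to reduce the claim to Theorem \ref{csb}. Since $\la$ is local, there is a unique maximal ideal $M$, so $\mx=\{M\}$. If I can verify that $M$ itself is a regular ideal, then $\mx\preccurlyeq\Sigma_{\la}$, where $\Sigma_{\la}$ denotes the class of regular ideals, and Theorem \ref{csb} immediately yields the quasi-compactness of $\Sigma_{\la}$.

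The whole argument therefore reduces to checking that in a local quantale the unique maximal ideal is regular. For this I would unpack the defining property of a local quantale: every proper ideal of $\la$ is contained in $M$, and dually every element of $\la$ lying outside $M$ exhibits a unit-like behaviour with respect to $\pr$. Combining this external structure with the definition of a regular ideal from \cite{Gos23'}, one should be able to produce the internal witness that certifies the regularity of $M$, either by exhibiting an explicit element of $M$ meeting the definition or by leveraging the unit-like elements of $\la\setminus M$ to rule out the obstructions to regularity.

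The main obstacle is precisely this definitional verification that $M$ is regular; once it is in place, the quasi-compactness of $\Sigma_{\la}$ is a one-line appeal to Theorem \ref{csb}, since the subbasis of closed sets from (\ref{sbcs}) is already known to behave correctly on any class of ideals satisfying the containment $\mx\preccurlyeq\Sigma_{\la}$. Should the direct check be awkward, a parallel route is available via Proposition \ref{cqc}: identify the set of maximal elements $\mathcal{M}(\Sigma_{\la})$ of the regular ideals and observe that the local hypothesis forces this set to be $\{M\}$, which is trivially quasi-compact, so that the implication $(2)\Rightarrow(1)$ of Proposition \ref{cqc} completes the argument.
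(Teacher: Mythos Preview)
Your reduction to Theorem \ref{csb} is exactly what the paper does, \emph{but only in one of two cases}. The genuine gap is your assertion that ``the whole argument therefore reduces to checking that in a local quantale the unique maximal ideal is regular.'' This claim is false in general: the unique maximal ideal $M$ may consist entirely of zero-divisors, in which case $M$ is not a regular ideal and $\mx\not\preccurlyeq\Sigma_{\la}$, so Theorem \ref{csb} does not apply. Your proposed verification (``produce the internal witness that certifies the regularity of $M$'') cannot succeed in this situation, because there is no such witness.

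The paper's proof proceeds by a case split. If $M$ is regular, it invokes Theorem \ref{csb} just as you propose. If $M$ consists of zero-divisors, then every regular element of $\la$ is invertible, so no proper ideal can contain a regular element; hence $\Sigma_{\la}=\emptyset$, which is vacuously quasi-compact. You have the first case but are missing the second entirely. Your fallback via Proposition \ref{cqc} does not rescue the argument either: if $M\notin\Sigma_{\la}$, then $\mathcal{M}(\Sigma_{\la})$ is certainly not $\{M\}$, and indeed condition (2) of Proposition \ref{cqc} becomes vacuous only once you have already observed that $\Sigma_{\la}$ is empty, which is precisely the missing step.
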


\begin{proof}
Let $\la$ be a local quantale with a unique maximal ideal $M$. We will consider two cases:
\begin{enumerate}
\item If $M$ is regular, the claim follows from Theorem \ref{csb}. 

\item If $M$ consists of zero-divisors, then every regular element in $\la$ is invertible. Consequently, the set of regular ideals in $\la$ is empty. By definition, an empty set is also quasi-compact. \qedhere
\end{enumerate}
\end{proof}
\smallskip 

\section{Separation axioms}\label{spax}
\smallskip 

The purpose of this section is to investigate the lower separation axioms of quantale spaces, specifically $T_0$, $T_1$, and sobriety. Notably, sobriety will hold significant importance as it aids in characterizing quantale spaces that are spectral. We commence by presenting an assertive yet obvious separation property that holds for all quantale spaces.

\begin{proposition}\label{t0a}
Every quantale space is a $T_0$-space. 
\end{proposition}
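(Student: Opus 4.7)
The plan is to use the subbasic closed sets $I^{\uparrow}$ directly, since each singleton $\{I\}$ with $I\in\Sigma_{\la}\subseteq\id$ is an admissible choice of $X$ in the definition (\ref{sbcs}), and for such a choice $\mathcal{D}_{\{I\}}=I$, so $I^{\uparrow}=\{K\in\Sigma_{\la}\mid I\preccurlyeq K\}$. In particular $I\in I^{\uparrow}$ always holds, while $J\in I^{\uparrow}$ if and only if $I\preccurlyeq J$. This gives an immediate order-theoretic handle on separation.

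Given two distinct points $I,J\in\Sigma_{\la}$, I would argue that antisymmetry of the partial order $\preccurlyeq$ on $\la$ forces at least one of $I\not\preccurlyeq J$ or $J\not\preccurlyeq I$ to hold, for otherwise $I=J$. In the first case, $I^{\uparrow}$ is a subbasic (hence closed) subset of $\Sigma_{\la}$ containing $I$ but missing $J$, so its complement is an open neighborhood of $J$ not containing $I$. In the second case, $J^{\uparrow}$ plays the symmetric role. Either way, there exists an open set containing exactly one of the two points, which is precisely the $T_0$ axiom.

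There is essentially no obstacle here: the entire content is that singletons of ideals produce subbasic closed sets of the expected form, combined with antisymmetry of $\preccurlyeq$. I would keep the write-up to a short paragraph, observing along the way that the closure of a singleton $\{I\}$ in $\Sigma_{\la}$ is nothing other than $I^{\uparrow}$, which gives the usual reformulation that the specialization order coincides with $\preccurlyeq$ restricted to $\Sigma_{\la}$; this observation will also be convenient when moving on to the $T_1$ and sober cases in the same section.
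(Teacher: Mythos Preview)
Your argument is correct and matches the paper's approach: both use that $I^{\uparrow}$ is a (subbasic) closed set with $J\in I^{\uparrow}\Leftrightarrow I\preccurlyeq J$, together with antisymmetry of $\preccurlyeq$; the paper merely phrases it contrapositively, assuming $I^{\uparrow}=J^{\uparrow}$ and deducing $I=J$. One caution on your closing aside: the equality $\mathcal{C}(\{I\})=I^{\uparrow}$ is true but not quite immediate, since the sets $X^{\uparrow}$ form only a \emph{subbasis}; the paper establishes this separately in Lemma~\ref{irrc}, and it is not needed for the $T_0$ argument itself.
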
 

\begin{proof}
Let $I,$ $J\in \Sigma_{\la}$ such that $I^\uparrow=J^\uparrow.$  Since $J\in I^\uparrow,$ we have $I\preccurlyeq J$. Similarly, we obtain $J\preccurlyeq I$. Hence, $I=J$, and we have the claim.  	
\end{proof}

To proceed further with separation axioms, we first need the following lemma. This lemma, however, is also going to be useful in the context of connectedness.

\begin{lemma}\label{irrc}
Every subbasic closed set of the form $\left\{I^\uparrow\mid I\in I^\uparrow\right\}$ of a quantale space $\Sigma_{\la}$ is irreducible. 
\end{lemma}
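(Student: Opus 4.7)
The plan is to identify the subbasic closed set $I\ua$ with the topological closure $\overline{\{I\}}$ of the singleton $\{I\}$ inside $\Sigma_{\la}$, and then appeal to the general topological fact that a singleton closure in any space is irreducible. Recognising $I\ua$ as $\overline{\{I\}}$ is natural because the quantale topology is generated by a closed subbasis whose behaviour is governed purely by the order $\preccurlyeq$.

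First I will establish $I\ua=\overline{\{I\}}$. One inclusion is immediate: $I\ua$ is a subbasic, hence closed, subset of $\Sigma_{\la}$, and the hypothesis $I\in I\ua$ (which is what the condition $I\in I\ua$ amounts to, since $I\preccurlyeq I$) guarantees $I\in I\ua$, so $I\ua$ contains $\overline{\{I\}}$. For the other inclusion, let $J\in I\ua$, so $I\preccurlyeq J$. Since the closed sets of $\Sigma_{\la}$ are precisely the arbitrary intersections of finite unions of subbasic closed sets $X\ua$, it suffices to check that every subbasic closed set containing $I$ also contains $J$: if $I\in X\ua$, then $\mathcal{D}_X\preccurlyeq I\preccurlyeq J$, so $J\in X\ua$. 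The passage from subbasic sets to finite unions and then to arbitrary intersections is formal.

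Second, I will apply the standard fact that in any topological space the closure of a one-point set is irreducible. The argument is one line: if $\overline{\{I\}}=F_1\vee F_2$ with $F_1,F_2$ closed in $\Sigma_{\la}$, then $I$ lies in at least one of the $F_i$, say $F_1$; minimality of the closure forces $\overline{\{I\}}\preccurlyeq F_1$, hence $F_2\preccurlyeq F_1$, so the decomposition is trivial.

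I do not anticipate a serious obstacle. The only mildly delicate step is the identification $I\ua=\overline{\{I\}}$, which rests on the fact that a Kuratowski closure generated by a closed subbasis is computed as the intersection of all subbasic closed sets containing the given set; once that is spelled out, irreducibility follows by a purely topological argument that does not use any structure of the quantale beyond the partial order.
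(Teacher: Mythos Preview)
Your proposal is correct and follows essentially the same approach as the paper: both arguments establish the identity $I\ua=\mathcal{C}(\{I\})$ and then invoke the general fact that the closure of a point is irreducible. Your verification of the inclusion $I\ua\preccurlyeq\overline{\{I\}}$ via subbasic sets (and then lifting to finite unions and intersections) is in fact slightly cleaner than the paper's version, which introduces an unnecessary case distinction on whether $\mathcal{C}(I)=\Sigma_{\la}$.
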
 

\begin{proof} 
In fact, we show more, \textit{i.e.}, $I^\uparrow=\mathcal{C}(I)$ for every $I$ such that $I\in I^\uparrow$. By definition of $\mathcal{C}(\cdot)$ and hypothesis,	 we have  $\mathcal{C}(I)\preccurlyeq I^\uparrow$. 
We obtain the other inclusion by considering the following two cases:

(1) $\mathcal{C}(I)= \Sigma_{\la}$. Then  we have the claim from the fact that
\[
\Sigma_{\la}=\mathcal{C}(I)\preccurlyeq I^\uparrow\preccurlyeq \Sigma_{\la}.
\]

(2) $\mathcal{C}(I)\neq \Sigma_{\la}$. Since $\mathcal{C}(I)$ is a closed set, there exists an index set $\Omega$ such that for each $\omega\in\Omega$, there is a positive integer $n_{\omega}$ and $I_{\omega 1},\dots, I_{\omega n_\omega}\in \id$ such that 
\[
\mathcal{C}(I)={\bigwedge_{\omega\in\Omega}}\left({\bigvee_{ i\,=1}^{ n_\omega}}I_{\omega i}^\uparrow\right).
\]	
Since 
$\mathcal{C}(I)\neq \Sigma_{\la}$, we may assume that  ${\bigvee_{ i\,=1}^{ n_\omega}}I_{\omega i}^\uparrow$ is non-empty for each $\omega$. Therefore,  $I\in   {\bigvee_{ i\,=1}^{ n_\omega}}I_{\omega i}^\uparrow$, for each $\omega$, and from which we can deduce that $I^\uparrow\preccurlyeq {\bigvee_{ i=1}^{ n_\omega}}I_{\omega i}^{\uparrow}$, that is, $I^\uparrow\preccurlyeq \mathcal{C}(I)$. 	
\end{proof}     

\begin{corollary}\label{spiir}
Every non-empty subbasic closed set of the quantale space $\idp$ is irreducible.
\end{corollary}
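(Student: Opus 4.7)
The plan is to reduce the claim to Lemma \ref{irrc} by observing that a subbasic closed set $X^\uparrow$ in $\idp$ depends on $X$ only through the single ideal $\mathcal{D}_X = \bigwedge X$. More precisely, I would argue $X^\uparrow = (\mathcal{D}_X)^\uparrow$ directly from the definition (\ref{sbcs}): an element $J \in \idp$ lies in $X^\uparrow$ precisely when $\mathcal{D}_X \preccurlyeq J$, which is precisely the condition $\mathcal{D}_{\{\mathcal{D}_X\}} = \mathcal{D}_X \preccurlyeq J$ defining $(\mathcal{D}_X)^\uparrow$.

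Next, I would verify that $\mathcal{D}_X$ itself lies in $\idp$ whenever $X^\uparrow$ is non-empty. Since the meet of any family of ideals is again an ideal, $\mathcal{D}_X \in \id$. If $J \in X^\uparrow$ is a witness for non-emptiness, then $\mathcal{D}_X \preccurlyeq J \precnapprox \la$, so $\mathcal{D}_X$ cannot be the improper ideal $\la$; hence $\mathcal{D}_X \in \idp$. In particular, $\mathcal{D}_X \in (\mathcal{D}_X)^\uparrow$.

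With these two observations, the corollary follows immediately by invoking Lemma \ref{irrc} with $\Sigma_{\la} = \idp$ and $I = \mathcal{D}_X$: the hypothesis $I \in I^\uparrow$ is satisfied, so $(\mathcal{D}_X)^\uparrow$ is irreducible, and therefore so is $X^\uparrow$.

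There is no real obstacle here; the only subtle point is the verification that $\mathcal{D}_X$ is a \emph{proper} ideal (so it belongs to $\idp$, which is what lets us apply Lemma \ref{irrc} in the class $\Sigma_\la = \idp$). This is exactly where the hypothesis that $X^\uparrow$ is non-empty is used.
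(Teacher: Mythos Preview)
Your proof is correct and follows exactly the route the paper intends: the corollary is stated without proof immediately after Lemma~\ref{irrc}, and the intended argument is precisely the reduction you give---rewrite an arbitrary non-empty subbasic closed set $X^\uparrow$ as $(\mathcal{D}_X)^\uparrow$, check that $\mathcal{D}_X\in\idp$ using non-emptiness, and invoke Lemma~\ref{irrc}. The only detail you make explicit that the paper leaves tacit is the verification that $\mathcal{D}_X$ is proper, which is indeed the place where non-emptiness is needed.
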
 

\begin{theorem}\label{ZariskiT1}
A quantale space $\Sigma_{\la}$ is a $T_1$-space if and only if $M\in \Sigma_{\la}$ for all $M\in \mx$.
\end{theorem}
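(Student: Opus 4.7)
The key step is Lemma \ref{irrc}, which identifies the closure of a singleton $\{I\}$ in $\Sigma_{\la}$ with $I^{\uparrow}=\{J\in\Sigma_{\la}\mid I\preccurlyeq J\}$. Consequently, $\{I\}$ is closed in $\Sigma_{\la}$ if and only if $I$ is $\preccurlyeq$-maximal in $\Sigma_{\la}$, and $\Sigma_{\la}$ is a $T_1$-space if and only if every element of $\Sigma_{\la}$ is $\preccurlyeq$-maximal in $\Sigma_{\la}$. This is the purely topological content that must be married to a statement about maximal ideals of $\la$.

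To bridge maximality in $\Sigma_{\la}$ with maximality in $\la$, the plan is to use Zorn's lemma exactly as in the proof of Proposition \ref{cqc}: since every $I\in\Sigma_{\la}$ is proper, there is some $M\in\mx$ with $I\preccurlyeq M$. For the direction $\mx\preccurlyeq\Sigma_{\la}\Rightarrow T_1$, such an $M$ lies inside $\Sigma_{\la}$ by hypothesis, so $M\in I^{\uparrow}\cap\Sigma_{\la}$; matching this against the requirement that $I$ be $\preccurlyeq$-maximal in $\Sigma_{\la}$ forces $I=M$, so every $I\in\Sigma_{\la}$ is a maximal ideal of $\la$ and every $\{I\}$ is closed. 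For the converse $T_1\Rightarrow\mx\preccurlyeq\Sigma_{\la}$, given $M\in\mx$ one produces, again via Zorn, an element $I\in\Sigma_{\la}$ with $I\preccurlyeq M$; by the first paragraph $I$ is $\preccurlyeq$-maximal in $\Sigma_{\la}$, and the maximality of $M$ in $\idp$ collapses the chain $I\preccurlyeq M$ to $I=M$, placing $M$ in $\Sigma_{\la}$.

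The principal obstacle is the converse direction, where the local topological condition ``every singleton is closed'' must be upgraded to the global set-theoretic containment $\mx\preccurlyeq\Sigma_{\la}$. Lemma \ref{irrc} handles one half of the bridge by translating closedness into $\preccurlyeq$-maximality inside $\Sigma_{\la}$, and Zorn's lemma handles the other by furnishing a maximal ideal of $\la$ above each element; the delicate step is ensuring that every $M\in\mx$ actually appears as such a matching witness in $\Sigma_{\la}$, rather than merely as an external upper bound.
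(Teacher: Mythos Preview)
Both directions of your argument contain genuine gaps.

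For $\mx\preccurlyeq\Sigma_{\la}\Rightarrow T_1$: the reasoning is circular. You produce $M\in\mx\cap\Sigma_{\la}$ with $I\preccurlyeq M$ and then invoke ``the requirement that $I$ be $\preccurlyeq$-maximal in $\Sigma_{\la}$'' to force $I=M$; but that maximality is precisely the conclusion ($T_1$) you are trying to establish, not an available hypothesis. In fact the implication is false as written: $\Sigma_{\la}=\idp$ contains $\mx$, yet $\idp$ is not $T_1$ whenever $\la$ has a non-maximal proper ideal, since then $\mathcal{C}(\{I\})=I^{\uparrow}\supsetneq\{I\}$.

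For $T_1\Rightarrow\mx\preccurlyeq\Sigma_{\la}$: Zorn's lemma yields maximal elements \emph{above} a given one, not members of $\Sigma_{\la}$ \emph{below} a prescribed $M\in\mx$, so the appeal to Zorn is misdirected; and even granting some $I\in\Sigma_{\la}$ with $I\preccurlyeq M$, neither the $\preccurlyeq$-maximality of $I$ inside $\Sigma_{\la}$ nor the maximality of $M$ in $\idp$ collapses $I\preccurlyeq M$ to equality unless one already knows $M\in\Sigma_{\la}$, which is the goal. Your final paragraph correctly flags this as unresolved. This direction is likewise false: $\Sigma_{\la}=\{M_0\}$ for a single maximal ideal $M_0$ is trivially $T_1$ but need not contain all of $\mx$.

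Compare with the paper's proof: its forward direction concludes ``showing that $\Sigma_{\la}\preccurlyeq\mx$'', and its converse computes $M^{\uparrow}=\{M\}$ for each $M$, which yields $T_1$ precisely when every element of $\Sigma_{\la}$ is a maximal ideal. Thus the paper is really arguing the equivalence $T_1\Longleftrightarrow\Sigma_{\la}\preccurlyeq\mx$, and the inclusion in the displayed statement appears to be a typo with the roles of $\mx$ and $\Sigma_{\la}$ reversed. You have attempted to prove the statement as literally written, which is why neither direction can be made to work.
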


\begin{proof}
Suppose that $I\in \Sigma_{\la}$. Then $I\in I^{\uparrow}$, and so, by Corollary \ref{spiir}, $\mathcal{C}(I)=I^{\uparrow}$. Let $M$ be a maximal ideal of $\la$ such that $I\preccurlyeq M$. Then   \[M\in 	I^{\uparrow}=\mathcal{C}(I) = \{I\},\] where the second equality follows from the fact that $\Sigma_{\la}$ is a $T_{ 1}$-space. Therefore $M=I$, showing that $\Sigma_{\la}\preccurlyeq \mx$.  
Conversely,  $M^{\uparrow}=\{M\}$ holds for every $M\in \mx$, so that $M\in M^{\uparrow}$. Hence, $\mathcal{C}(M)=\{M\}$, by Corollary \ref{spiir} and this proves the desired claim.
\end{proof}

Next we consider the separation axiom of sobriety. Recall that a topological space $X$ is called \emph{sober} if every non-empty irreducible closed subset $\mathcal K$ of $X$ is of the form: $\mathcal K=\mathcal{C}(\{x\})$, the closure of an unique singleton set $\{x\}$. The following result characterizes sober quantale spaces.

\begin{theorem}
\label{sob}  
An quantale space $\Sigma_{\la}$ is sober if and only if
\begin{equation}\label{soc}
\bigwedge_{\substack{I\preccurlyeq J ,\, J\in \Sigma_{\la}}}  J^{\uparrow}\preccurlyeq I^{\uparrow},
\end{equation}
for every non-empty irreducible subbasic closed subset of $I^{\uparrow}$ of $\Sigma_{\la}$.
\end{theorem}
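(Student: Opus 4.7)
The plan is to reduce sobriety to a statement about subbasic closed sets and then unwrap (\ref{soc}) as the precise condition yielding a generic point. Throughout, uniqueness of generic points comes from the $T_0$ property (Proposition \ref{t0a}).

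First I would establish a structural reduction: every non-empty irreducible closed subset of $\Sigma_{\la}$ is of the form $I^{\uparrow}$ for some ideal $I$ of $\la$. Writing a generic closed set as $\mathcal{K}=\bigwedge_{\omega\in\Omega}\bigvee_{i=1}^{n_{\omega}}I_{\omega,i}^{\uparrow}$, irreducibility applied to $\mathcal{K}\preccurlyeq\bigvee_{i}I_{\omega,i}^{\uparrow}$ lets me pick, for each $\omega$, an index $i(\omega)$ with $\mathcal{K}\preccurlyeq I_{\omega,i(\omega)}^{\uparrow}$. Then
\[
\mathcal{K}=\bigwedge_{\omega\in\Omega}I_{\omega,i(\omega)}^{\uparrow}=\Bigl(\bigvee_{\omega\in\Omega}I_{\omega,i(\omega)}\Bigr)^{\!\uparrow},
\]
which is subbasic. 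So $\Sigma_{\la}$ is sober if and only if each non-empty irreducible subbasic closed set $I^{\uparrow}$ arises as the closure of a single point of $\Sigma_{\la}$.

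For the forward direction, assume $\Sigma_{\la}$ is sober and let $I^{\uparrow}$ be non-empty and irreducible. Sobriety gives some $K\in\Sigma_{\la}$ with $I^{\uparrow}=\mathcal{C}(\{K\})$, and Lemma \ref{irrc} rewrites this as $I^{\uparrow}=K^{\uparrow}$. Hence $K\in I^{\uparrow}$ and $K$ is $\preccurlyeq$-least in $I^{\uparrow}$. Any $L$ lying in $\bigwedge_{I\preccurlyeq J,\,J\in\Sigma_{\la}}J^{\uparrow}$ is in particular in $K^{\uparrow}$ (specialising $J=K$), so $L\in I^{\uparrow}$; this is exactly (\ref{soc}).

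For the converse, assume (\ref{soc}) and let $\mathcal{K}$ be a non-empty irreducible closed subset of $\Sigma_{\la}$. By the structural reduction, $\mathcal{K}=I^{\uparrow}$ for some ideal $I$ with $I^{\uparrow}$ irreducible. I would combine (\ref{soc}) with Lemma \ref{irrc} to produce a $K\in\Sigma_{\la}$ with $K^{\uparrow}=I^{\uparrow}$; this $K$ is the desired generic point, unique by Proposition \ref{t0a}. The main obstacle is exactly this converse step: the set-theoretic containment (\ref{soc}) does not a priori hand over an element of $\Sigma_{\la}$, and the delicate point is to use irreducibility of $I^{\uparrow}$, together with (\ref{soc}), to argue that the $\preccurlyeq$-least element of $I^{\uparrow}$ (a candidate meet of ideals from $\Sigma_{\la}$) genuinely belongs to $\Sigma_{\la}$ and realises the closure-theoretic generic point via Lemma \ref{irrc}.
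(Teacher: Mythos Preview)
Your overall strategy---reducing an arbitrary irreducible closed set to a subbasic one via the decomposition $\mathcal{K}=\bigwedge_{\omega}\bigvee_{i} I_{\omega,i}^{\uparrow}$, and then invoking Lemma~\ref{irrc}---is exactly the route the paper takes.

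The gap you flag in the converse is genuine, and it originates in the statement rather than in your method. As literally written, (\ref{soc}) is automatic: since $I^{\uparrow}$ is assumed non-empty, any $L$ lying in every $J^{\uparrow}$ with $I\preccurlyeq J\in\Sigma_{\la}$ already satisfies $I\preccurlyeq L$ (pick any such $J$ and note $I\preccurlyeq J\preccurlyeq L$), so the left-hand intersection is always contained in $I^{\uparrow}$. Your forward argument is therefore correct but vacuous, and no set-theoretic inclusion of this shape can by itself produce a generic point in the converse.

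The paper's own proof reveals the intended reading: condition (\ref{soc}) is meant to assert that the ideal
\[
K\;:=\;\bigwedge_{I\preccurlyeq J,\;J\in\Sigma_{\la}} J
\]
belongs to $\Sigma_{\la}$ (equivalently, $K\in I^{\uparrow}$). Under this reading the converse goes through exactly along the lines you sketched: having written $\mathcal{K}=Y^{\uparrow}$ via your structural reduction, one always has $Y^{\uparrow}=K^{\uparrow}$ for $K=\bigwedge_{Y\preccurlyeq L,\,L\in\Sigma_{\la}} L$, and the hypothesis $K\in\Sigma_{\la}$ lets Lemma~\ref{irrc} give $K^{\uparrow}=\mathcal{C}(K)$. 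So the ``delicate point'' you isolated is not resolved by any additional argument in the paper; it is dissolved by reading (\ref{soc}) as a membership assertion rather than a containment. Your instinct that the meet over $I^{\uparrow}$ should serve as the generic point is precisely the mechanism the paper uses.
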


\begin{proof}
If $\Sigma_{\la}$ is a sober space and $I^{\uparrow}$ is a non-empty irreducible subbasic closed subset of $\Sigma_{\la}$, then  
$I^{\uparrow}=\mathcal{C}(\{J\})=J^{\uparrow}$
for some $J\in \Sigma_{\la}$, and we have $$\displaystyle J=\bigwedge_{\substack{I\preccurlyeq J,\, J\in \Sigma_{\la}}}  J^{\uparrow}\in \Sigma_{\la}.$$  
Conversely, suppose the condition (\ref{soc}) holds for every non-empty irreducible subset of $\Sigma_{\la}$. Let $\mathcal K$ be an irreducible closed subset of $\Sigma_{\la}$. Then $$\mathcal K=\bigwedge_{i\in \Omega}\left (  \bigvee_{j=1}^m J_{ji}^{\uparrow} \right)$$ for some ideals $J_{ji}$ of $\la$. Since $\mathcal K$ is irreducible, for every $i\in \Omega$ there exists an ideal $J_{ji}$ of $\la$ such that $\mathcal K\preccurlyeq J_{ji}^{\uparrow}\preccurlyeq \bigvee_{j=1}^mJ_{ji}^{\uparrow}$ and thus, if $Y= \bigvee_{i\in \Omega}J_{ji}$, then we have  $$\mathcal K=\bigwedge_{i\in \Omega}J_{ji}^{\uparrow}=Y^{\uparrow}=\left(\bigwedge_{\substack{Y\preccurlyeq L ,\, L\in \Sigma_{\la}}}  L \right)^{\uparrow}=\mathcal{C}\left({\bigwedge_{\substack{Y\preccurlyeq L,\, L\in \Sigma_{\la}}}  L}\right).$$ 
The uniqueness part of the claim follows from Proposition \ref{t0a}.
\end{proof}

\begin{corollary}\label{qss}
The quantale spaces of proper ideals, prime ideals, and strongly irreducible ideals are sober.
\end{corollary}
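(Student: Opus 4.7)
The plan is to apply Theorem \ref{sob} to each of the three classes in turn. In every case, given a non-empty irreducible subbasic closed set $I^{\uparrow}$, I would set $K := \bigwedge\{J \in \Sigma_{\la} \mid I \preccurlyeq J\}$, observe the immediate equality $I^{\uparrow} = K^{\uparrow}$, and then argue that $K \in \Sigma_{\la}$, so that the irreducible set $I^{\uparrow}$ coincides with the closure $\mathcal{C}(K)$ of a single point of $\Sigma_{\la}$, as required for sobriety.

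For proper ideals $\Sigma_{\la} = \idp$ the verification is automatic and does not even require irreducibility: non-emptiness of $I^{\uparrow}$ forces $I$ itself to be proper (any proper ideal above $I$ witnesses $I \neq \la$), so $I$ belongs to the indexing family, the infimum collapses to $I$, and $K = I \in \idp$.

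For $\Sigma_{\la} = \spec$ I would proceed by contradiction. Suppose $K$ is not prime and pick $a, b \in \la$ with $a \pr b \in K$ but $a, b \notin K$. Each $L \in K^{\uparrow}$ is prime and contains $a \pr b$, hence contains $a$ or $b$, so $K^{\uparrow} = (K^{\uparrow} \wedge \langle a \rangle^{\uparrow}) \vee (K^{\uparrow} \wedge \langle b \rangle^{\uparrow})$. Neither term can equal $K^{\uparrow}$, since if every $L \in K^{\uparrow}$ contained, say, $a$, the identity $K = \bigwedge_{L \in K^{\uparrow}} L$ would force $a \in K$, contradicting the choice of $a$. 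This exhibits the irreducible set $I^{\uparrow} = K^{\uparrow}$ as a union of two proper closed subsets, a contradiction, so $K \in \spec$. The case $\Sigma_{\la} = \sir$ runs verbatim, with $(a, b)$ replaced by a pair of ideals $(A, B)$ satisfying $A \wedge B \preccurlyeq K$ and $A, B \not\preccurlyeq K$, invoking property (\ref{hkp}) in place of primality.

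The main bookkeeping step is the identity $K = \bigwedge_{L \in K^{\uparrow}} L$, which follows at once from the definition of $K$ together with $I^{\uparrow} = K^{\uparrow}$; once this is in hand, the contradiction argument is routine and amounts to transporting the classical fact ``$V(I)$ irreducible implies $\sqrt{I}$ prime'' to the quantale setting. No machinery beyond Theorem \ref{sob} and the defining properties of the three ideal classes is needed.
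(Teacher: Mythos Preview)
Your argument is correct and is precisely the verification that the paper leaves implicit: the corollary is stated without proof, as an immediate consequence of Theorem~\ref{sob}, and your write-up simply supplies the details of checking condition~(\ref{soc}) for each of the three classes. Forming $K=\bigwedge\{J\in\Sigma_{\la}\mid I\preccurlyeq J\}$, noting $I^{\uparrow}=K^{\uparrow}$, and then showing $K\in\Sigma_{\la}$ (trivially for $\idp$, via the irreducibility-forces-primality argument for $\spc$, and analogously for $\sir$) is exactly the intended route, so there is nothing to compare.
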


Hochster in \cite{Hoc69} has characterized spectral spaces  as spectra of prime ideals of rings endowed with Zariski topology. A topological space is called \emph{spectral} if it is quasi-compact, sober,  admitting a basis of quasi-compact open subspaces that is closed under finite intersections. Our next goal is to  prove the following result.

\begin{theorem}\label{tsqs}
Let $\la$ be a quantale. Then
the quantale space $\idp$ of proper ideals is spectral;
\end{theorem}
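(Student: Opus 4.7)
The strategy is to verify the three defining conditions of Hochster's notion of a spectral space: quasi-compactness, sobriety, and a basis of quasi-compact open subspaces closed under finite intersections. The first two are already in hand: quasi-compactness of $\idp$ is Corollary \ref{ciqc}, and sobriety is Corollary \ref{qss}. Only the third condition demands genuine work.

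For the basis, I would first observe that every ideal $I \in \id$ satisfies $I = \bigvee_{a \in I} \langle a \rangle$, whence the subbasic closed set $I^\uparrow$ equals $\bigwedge_{a \in I} \langle a\rangle^\uparrow$. Consequently the quantale topology on $\idp$ is already generated by the principal subbasis $\{\langle a\rangle^\uparrow\}_{a \in \la}$. Setting $U_a := \idp \setminus \langle a \rangle^\uparrow = \{J \in \idp \mid a \notin J\}$, the collection of finite intersections $\mathcal{B} := \{U_{a_1} \wedge \cdots \wedge U_{a_n} \mid n \in \mathds{N},\, a_i \in \la\}$ is a basis for $\tau_\la$, and it is closed under finite intersections by construction.

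The heart of the proof is showing that each basic open $U = U_{a_1} \wedge \cdots \wedge U_{a_n}$ is quasi-compact. By Alexander's subbasis theorem applied to the subspace $U$, it is enough to refine any ambient-subbasic cover $U \preccurlyeq \bigvee_{\lambda \in \Lambda} U_{b_\lambda}$ to a finite subcover. I would form $K := \bigvee_{\lambda \in \Lambda}\langle b_\lambda \rangle \in \id$ and split into two cases. If $K = \la$, then the explicit formula for the join of ideals gives $\top \preccurlyeq \bigvee_{j=1}^{n} c_j \pr b_{\lambda_j}$ for some finite selection $\lambda_1,\dots,\lambda_n$ and elements $c_j \in \la$; since $c_j \pr b_{\lambda_j} \preccurlyeq \top \pr b_{\lambda_j} = b_{\lambda_j}$, any proper ideal containing every $b_{\lambda_j}$ would contain $\top$ and fail to be proper, so $\{U_{b_{\lambda_j}}\}_{j=1}^{n}$ already covers $\idp \supseteq U$. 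If instead $K$ is proper, then $K \in \idp$ lies in every $\langle b_\lambda\rangle^\uparrow$; since by the covering hypothesis $K \notin U$, some $a_i$ must belong to $K$, and unpacking the join produces $a_i \preccurlyeq \bigvee_{j=1}^{n} c_j \pr b_{\lambda_j}$ for a finite selection, whence the same closure argument yields $U \preccurlyeq \bigvee_{j=1}^{n} U_{b_{\lambda_j}}$.

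The main obstacle is precisely this quasi-compactness step: unlike the classical Zariski case, the identity $U_a \wedge U_b = U_{a \pr b}$ fails because proper ideals need not be prime, so one cannot collapse $\mathcal{B}$ to the singleton basis $\{U_a\}_{a \in \la}$. The case split on whether $K$ is proper or equal to $\la$ plays the role that primality plays in the classical argument, and once it is carried through the three Hochster conditions combine to give spectrality.
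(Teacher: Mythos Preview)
Your argument is correct, but it follows a genuinely different route from the paper's. The paper does not verify Hochster's third condition directly; instead it invokes a result of Priestley that every algebraic lattice with the coarse lower topology is spectral, so that the full space $\id$ is spectral, and then applies a general lemma (Lemma \ref{cso}) stating that any quasi-compact, sober, open subspace of a spectral space is again spectral. The only new thing the paper checks is that $\idp$ is open in $\id$, which is immediate since $\id\setminus\idp=\{\la\}=\la^\uparrow$ is subbasic closed. Your approach, by contrast, is entirely self-contained: you identify the principal subbasis $\{\langle a\rangle^\uparrow\}_{a\in\la}$, pass to the basis of finite intersections of the complements $U_a$, and then establish quasi-compactness of each basic open via Alexander's theorem and the case split on whether $K=\bigvee_\lambda\langle b_\lambda\rangle$ is proper. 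This case split is exactly the device that compensates for the failure of $U_a\wedge U_b=U_{a\pr b}$ in the absence of primality, and it is a nice observation in its own right. The paper's route is shorter and situates the result within existing lattice-theoretic machinery; yours avoids the external citation, exhibits the quasi-compact open basis explicitly, and would generalise to any $\Sigma_\la$ for which the same case analysis goes through. One cosmetic point: you reuse the symbol $n$ both for the number of generators $a_i$ of $U$ and for the size of the finite subcover; distinct letters would read more cleanly.
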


\begin{proof}
We prove the claim by applying the following lemma\footnote{for a proof of Lemma \ref{cso}, we refer to \cite[Lemma 2.2]{Gos23}}. 	

\begin{lemma}\label{cso}
A quasi-compact, sober, open subspace of a spectral space is spectral. 
\end{lemma}

The advantage of the above lemma is that we can avoid proving the existence of a compact open basis and closedness of open compact sets under finite intersections. 	

Now the quantale space $\id$ of all ideals is spectral follows from the fact that an algebraic lattice endowed with coarse lower topology\footnote{which is same as quantale topology in our context.} is a spectral space (see \cite[Theorem 4.2]{Pri94}).
Since the quantale space $\idp $ is a subspace of $\id$, according to Lemma \ref{cso}, we need to verify that  $\idp $ is quasi-compact, sober, and
is an open subspace of the quantale space $\id$. Now quasi-compactness and sobriety respectively follow from Corollary \ref{ciqc} and Corollary \ref{qss}. 
Therefore, what remains is  to show that  
the quantale space $\idp $ is open in $\id$.
Since $\la\in\id,$ by Lemma \ref{irrc}, $\la=\la^{\uparrow}=\mathcal{C}(\la),$ and therefore $\id \backslash\idp $ is closed, and that implies $\idp $ is open. 
\end{proof} 

\smallskip 
\section{Connectedness and continuity}\label{cac}
\smallskip 

Like Alexander subbasis theorem, there is no characterization of connectedness in terms of subbasic closed sets. Nevertheless, we wish to present a disconnectivity result of quantale spaces of a quantale that bears resemblance to the fact that if spectrum of prime ideals (of a commutative ring with identity) endowed with Zariski topology is disconnected, then the ring has a proper idempotent element (see \cite[\textsection 4.3, Corollary 2]{Bou72}). 

We say a closed subbasis $\mathcal{S}$ of a topological space $X$ \emph{strongly disconnects} $X$  if there exist two non-empty subsets $A,$ $B$ of $\mathcal{S}$ such that $X=A\vee  B$ and $A\wedge  B=\emptyset$. 
It is clear that if some closed subbasis strongly disconnects  a  topological space, then the space is disconnected. Also, if a space is disconnected, then some closed subbasis (for instance the collection of all its closed subspaces) strongly disconnects it. 

\begin{proposition}\label{pr1}  
Suppose that $\la$ is a quantale with  $\mathcal{J}(\la)=\theta$. Let $\Sigma_{\la}$ be a spectrum of $\la$ containing all maximal ideals in $\la.$ If the subbasis $\mathcal{S}$ of the quantale space $\Sigma_{\la}$ strongly  disconnects  $\Sigma_{\la}$, then $\la$ has a non-trivial idempotent element.
\end{proposition}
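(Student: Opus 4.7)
The plan is to push the strong disconnection of $\Sigma_\la$ down to a clopen partition of $\mx$, extract a pair of ideals whose join is $\la$ and whose meet is $\theta$, and then read off a non-trivial idempotent from a decomposition $\top=x\vee y$.

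First, write $\mathcal A:=\bigcup A$ and $\mathcal B:=\bigcup B$. Indexing each subbasic piece by its kernel ideal, I may present these as $\mathcal A=\bigcup_\alpha \mathfrak a_\alpha^\uparrow$ and $\mathcal B=\bigcup_\beta \mathfrak b_\beta^\uparrow$ with each $\mathfrak a_\alpha,\mathfrak b_\beta\in\id$. Every set of the form $\mathfrak c^\uparrow$ is upward closed in $\Sigma_\la$, so $\mathcal A,\mathcal B$ are upward closed, and since a strong disconnection is in particular a disconnection, both are clopen. Intersecting with $\mx\subseteq\Sigma_\la$ produces a partition $\mx=E_A\sqcup E_B$ whose pieces are clopen in the subspace topology. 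Condition~\eqref{hkp} holds on $\mx$, so that subspace topology coincides with the hull-kernel topology, and Theorem~\ref{kclcb} identifies every closed subset of $\mx$ with one of the form $I^\uparrow\cap\mx$ for some $I\in\id$; in particular $E_A$ and $E_B$ are hull-kernel closed. Both are non-empty: if $J\in\mathcal B$, then $J$ is proper, Zorn's lemma produces $M\in\mx$ with $J\preccurlyeq M$, and upward-closure of $\mathcal B$ forces $M\in E_B$ (symmetrically for $E_A$).

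Next, set $\mathcal I_A:=\bigwedge_{M\in E_A}M$ and $\mathcal I_B:=\bigwedge_{M\in E_B}M$. The partition gives
\[
\mathcal I_A\wedge \mathcal I_B=\bigwedge_{M\in\mx}M=\mathcal J(\la)=\theta.
\]
To prove $\mathcal I_A\vee \mathcal I_B=\la$, suppose for contradiction that $\mathcal I_A\vee \mathcal I_B\preccurlyeq N$ for some $N\in\mx$. Without loss of generality $N\in E_A$; then $\mathcal I_B\preccurlyeq N$ places $N$ in $\mathcal I_B^\uparrow\cap\mx$, which by Theorem~\ref{kclcb} applied inside $\mx$ is the hull-kernel closure of $E_B$. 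Since $E_B$ is closed, this closure equals $E_B$, so $N\in E_A\cap E_B=\emptyset$, contradiction.

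Finally, from $\top\in\mathcal I_A\vee \mathcal I_B$ and the definition of the join of ideals, a finite expression for $\top$ can be regrouped into $\top=x\vee y$ with $x\in\mathcal I_A$ and $y\in\mathcal I_B$. Monotonicity of $\pr$ together with $\top$ being the identity gives $x\pr y\preccurlyeq x\pr\top=x$ and $x\pr y\preccurlyeq y$, so $x\pr y\in\mathcal I_A\wedge \mathcal I_B=\theta$ and hence $x\pr y=\bot$. Then
\[
x=x\pr\top=x\pr(x\vee y)=x^2\vee(x\pr y)=x^2,
\]
so $x$ is idempotent. Non-triviality follows from $\mathcal I_A,\mathcal I_B\ne\la$ (as $E_A,E_B$ are non-empty): $x\in\mathcal I_A$ forces $x\ne\top$, and $x=\bot$ would yield $\top=y\in\mathcal I_B$, hence $\mathcal I_B=\la$, which is excluded.

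The delicate step is the identification $\mathcal I_B^\uparrow\cap\mx=E_B$ in the comaximality argument: this rests on Theorem~\ref{kclcb}'s description of hull-kernel closure as $\bigwedge_{M\in T}M$, and on the closedness of $E_B$ coming from reading the strong disconnection as a genuine clopen decomposition of $\Sigma_\la$ that passes to the subspace $\mx$. Once these topological facts are in place, the remaining manipulations in $\la$ are routine consequences of the quantale axioms.
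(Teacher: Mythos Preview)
Your argument and the paper's diverge at the very first step, the reading of ``strongly disconnects.'' In the paper's proof the two pieces are \emph{single} subbasic closed sets $I^{\uparrow}$ and $J^{\uparrow}$, not unions of such sets, and from this the argument is immediate: $I^{\uparrow}\wedge J^{\uparrow}=(I\vee J)^{\uparrow}=\emptyset$ forces $I\vee J=\la$ (because $\mx\preccurlyeq\Sigma_{\la}$), while $\Sigma_{\la}=I^{\uparrow}\vee J^{\uparrow}\preccurlyeq(I\pr J)^{\uparrow}$ forces $I\pr J\preccurlyeq\mathcal J(\la)=\theta$. One then picks $x\in I$, $y\in J$ with $x\vee y=\top$, gets $x\pr y=\bot$, and computes $x=x\pr(x\vee y)=x^{2}$, non-trivial since $I,J\neq\la$. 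No passage to $\mx$, no auxiliary ideals $\mathcal I_A,\mathcal I_B$, and no appeal to Theorem~\ref{kclcb} are needed.

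Under your more general reading ($\mathcal A=\bigcup_\alpha\mathfrak a_\alpha^{\uparrow}$ and $\mathcal B=\bigcup_\beta\mathfrak b_\beta^{\uparrow}$) there is a genuine gap. You assert that $\mathcal A$ and $\mathcal B$ are clopen because ``a strong disconnection is in particular a disconnection,'' but an arbitrary union of subbasic closed sets need not be closed, so this inference is unsupported. The gap bites exactly at your comaximality step: you invoke Theorem~\ref{kclcb} to identify $\mathcal I_B^{\uparrow}\wedge\mx$ with the hull--kernel closure of $E_B$, and then collapse that closure to $E_B$ by citing closedness of $E_B$. Without that closedness, $\mathcal I_B\preccurlyeq N$ does not force $N\in E_B$, and the contradiction evaporates. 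If instead you adopt the paper's intended reading (a single $I^{\uparrow}$ and $J^{\uparrow}$), your proof becomes correct, but then the detour through $E_A,E_B,\mathcal I_A,\mathcal I_B$ is superfluous: the ideals $I$ and $J$ themselves already satisfy $I\vee J=\la$ and $I\pr J=\theta$, and the idempotent drops out directly.
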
 

\begin{proof}
Let $I$ and $J$ be ideals in $\la$ such that
(1) $
I^{\uparrow}\wedge  J^{\uparrow} =\emptyset,$
(2) $I^{\uparrow}\vee  J^{\uparrow} =\Sigma_{\la},$ and  
(3) $I^{\uparrow}\ne\emptyset,  J^{\uparrow} \neq \emptyset. $
Since $I^{\uparrow}\wedge  J^{\uparrow}=(I \vee J)^{\uparrow}$, we therefore have $(I\vee J)^{\uparrow}=\emptyset$ and hence $I\vee J =\la$ because $\Sigma_{\la}$ contains all maximal ideals in $\la$. On the other hand, 
\[
\Sigma_{\la}=I^{\uparrow}\vee  J^{\uparrow}\preccurlyeq (I\pr J)^{\uparrow},
\] which then implies that $I\pr J$ is contained in every maximal ideal in $\la$, and is therefore the zero ideal since $\la$ has zero $\mathcal{J}(\la)$. Note that the condition (3) implies that neither $I$ nor $J$ is the entire quantale $\la$. So the 
equality $I\vee J=\la$ furnishes non-zero elements $x\in I$ and $y\in J$ such that $x\vee y=\top$. Since  $x\pr y=\bot$ as $x\pr y\in I\pr J=\theta$, we therefore have
\[
x= x\pr(x\vee y)=x^{ 2}\vee (x\pr y) =x^2,
\]
showing that $x$ is a non-zero  idempotent element in $\la$. Since $I\ne \la$, and $x\ne \top$, and hence $x$ is a non-trivial idempotent element in $\la$.
\end{proof}

The next theorem gives a sufficient condition for a quantale space to be connected.

\begin{theorem}\label{conn}
If a spectrum $\Sigma_{\la}$ of a quantale $\la$ contains the zero ideal, then the quantale space $\Sigma_{\la}$ is connected. 
\end{theorem}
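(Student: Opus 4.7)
The plan is to prove the stronger statement that $\Sigma_\la$ is in fact irreducible when it contains $\theta$; irreducibility will automatically give connectedness. The key observation is that $\theta$ becomes a generic point of $\Sigma_\la$.

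First, I would unpack the subbasic closed set $\theta^\uparrow$ using the definition (\ref{sbcs}). Since $\theta$ is the bottom of $\la$, we have $\theta\preccurlyeq J$ for every $J\in\Sigma_\la$, so $\theta^\uparrow=\Sigma_\la$. Trivially, $\theta\in \theta^\uparrow$, so Lemma \ref{irrc} applies to give
\[
\mathcal{C}(\{\theta\}) = \theta^\uparrow = \Sigma_\la.
\]
In other words, $\{\theta\}$ is dense in $\Sigma_\la$.

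Second, I would deduce connectedness from the density of $\{\theta\}$. Suppose for contradiction that $\Sigma_\la = C_1\vee C_2$ with $C_1,$ $C_2$ disjoint non-empty closed subsets. The point $\theta$ lies in one of them, say $\theta\in C_1$. Since $C_1$ is closed, it contains the closure of $\{\theta\}$; but this closure is all of $\Sigma_\la$ by the first step, so $C_1=\Sigma_\la$, contradicting $C_2\neq\emptyset$. (In fact the same argument shows $\Sigma_\la$ is irreducible, not merely connected.)

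I do not foresee a genuine obstacle. The only technical point is checking that the closure computed via Lemma \ref{irrc} behaves as expected, and that the hypothesis $\theta\in\Sigma_\la$ is indeed what is needed to make $\theta^\uparrow=\Sigma_\la$ a valid closed set (as opposed to merely an element of the power set of $\id$). Once these are in place, the argument is just the standard fact that a topological space with a dense point is irreducible.
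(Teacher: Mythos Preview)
Your proposal is correct and follows essentially the same route as the paper: identify $\Sigma_\la$ with the subbasic closed set $\theta^\uparrow$, invoke Lemma \ref{irrc} (the paper cites its Corollary \ref{spiir}, but Lemma \ref{irrc} is really what is needed for a general $\Sigma_\la$) to conclude that this set is irreducible with generic point $\theta$, and then use that irreducibility implies connectedness. Your version just spells out the ``dense point $\Rightarrow$ irreducible'' step explicitly, whereas the paper states it in one line.
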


\begin{proof}
Since  $\Sigma_{\la}=\bot^{\uparrow}$ and irreducibility implies connectedness, the desired claim now follows from Corollary \ref{spiir}.
\end{proof}

\begin{corollary}
Quantale spaces of proper, finitely generated, principal ideals are connected. 
\end{corollary}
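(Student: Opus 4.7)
The plan is straightforward: apply Theorem \ref{conn} three times, once for each listed class. Since Theorem \ref{conn} asserts that any spectrum $\Sigma_{\la}$ containing the zero ideal $\theta$ is connected as a quantale space, it suffices to verify, for each of the three classes, that $\theta$ belongs to it.

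First I would handle the class of proper ideals. Assuming $\la$ is non-trivial (so that $\bot\neq\top$), we have $\theta\neq\la$, hence $\theta\in\idp$. Second, for the class of principal ideals, I would observe that $\theta=\langle\bot\rangle$: using the definition of the ideal generated by a set, together with the fact that $a\pr\bot=\bot$ for every $a\in\la$ (a consequence of axiom \eqref{jdp} applied to the empty join), the ideal $\langle\{\bot\}\rangle$ collapses to $\{\bot\}=\theta$. Third, every principal ideal is in particular finitely generated, so $\theta$ also sits in the class of finitely generated ideals. Each of these three classes therefore contains $\theta$, so Theorem \ref{conn} yields connectedness in each case.

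There is really no serious obstacle here; the only point where one has to be mildly careful is in confirming that $\theta$ is principal (equivalently, finitely generated), which is where one invokes the absorption behavior of $\bot$ under the quantale multiplication. Beyond this, the result is a direct instantiation of the previous theorem.
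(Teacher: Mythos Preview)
Your proposal is correct and matches the paper's intended approach: the corollary is stated immediately after Theorem~\ref{conn} with no separate proof, so the expected argument is precisely to verify that the zero ideal $\theta$ lies in each of the three classes and then invoke Theorem~\ref{conn}. Your verification that $\theta=\langle\bot\rangle$ (hence principal and finitely generated) and that $\theta$ is proper in a non-trivial quantale is exactly what is needed.
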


We now discuss about continuous maps between quantale spaces of quantales. Recall that a map $\phi\colon \la \to \la'$ from $\la$ to $\la'$ is called a \emph{quantale homomorphism} if 
\begin{enumerate}
\item  $x\preccurlyeq x'$ implies that $\phi(x)\preccurlyeq \phi(x');$	
	
\item  $\phi(x\vee x')=\phi(l)\vee \phi(l');$
	
\item  $\phi(x\wedge x')=\phi(x)\wedge \phi(x');$
	
\item  $\phi(x\pr  x')=\phi(x)\pr  \phi(x'),$
\end{enumerate}
for all $x,$ $x'\in \la$. 
Suppose that  $\phi\colon \la \to \la'$ is a quantale homomorphism.
If $J$ is an ideal in $\la'$, then the \emph{contraction of} $J$, denoted by $J^c$, is defined by $\phi\inv (J).$
Observe that although inverse image of an ideal under a quantale homomorphism is an ideal, but the same may not hold for an arbitrary class $\Sigma_{\la}$ of ideals. To resolve this problem, we need to impose that property on a class of ideal. 
We say a class $\Sigma_{\la}$ of ideals satisfies the \emph{contraction} property if for any quantale homomorphism $\phi\colon \la\to \la',$ the inverse image  $\phi\inv(I')$ is in $\Sigma_{\la}$, whenever $I'$ is in $\Sigma_{\la'}.$   
Since the sets $\{I^{\uparrow}\mid I\in \id\}$ only form a (closed) subbasis, all our arguments need to be at this level rather than just closed sets.

\begin{proposition}\label{conmap}
Let $\Sigma_{\la}$ be a spectrum satisfying the contraction property. Let $\phi\colon \la\to \la'$ be a quantale homomorphism  and $I'\in\Sigma_{\la'}.$ 
\begin{enumerate}
		
\item\label{contxr} The induced map $\phi_*\colon  \Sigma_{\la'}\to \Sigma_{\la}$ defined by  $\phi_*(I')=\phi\inv(I')$ is    continuous.
		
\item If $\phi$ is  surjective, then the quantale space $\Sigma_{\la'}$ is homeomorphic to the closed subspace $\mathrm{Ker}\phi^{\uparrow}$ of the quantale space $\Sigma_{\la}.$
		
\item\label{den} The subset  $\phi_*(\Sigma_{\la'})$ is dense in $\Sigma_{\la}$ if and only if $\mathrm{Ker}\phi\preccurlyeq \bigwedge_{I\in \Sigma_{\la}}I.$ 
\end{enumerate}
\end{proposition}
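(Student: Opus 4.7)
The plan is to work throughout at the level of the subbasis $\mathcal{B}_{\Sigma_{\la}}$ of closed sets $X\ua$, exploiting Lemma \ref{irrc}, the contraction property, and the natural interaction between $\phi$ and the operations $\mathcal{D}_{(\cdot)}$ and $(\cdot)\ua$. For part (1), continuity reduces to checking preimages of subbasic closed sets: for $X\preccurlyeq \id$,
\[
\phi_*\inv(X\ua)=\{I'\in\Sigma_{\la'}\mid \mathcal{D}_X\preccurlyeq \phi\inv(I')\}=\{I'\in\Sigma_{\la'}\mid \phi(\mathcal{D}_X)\subseteq I'\},
\]
and writing $Y$ for the ideal of $\la'$ generated by $\phi(\mathcal{D}_X)$, this set is precisely $Y\ua$, a subbasic closed set of $\Sigma_{\la'}$.

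For part (2), surjectivity of $\phi$ gives the classical order bijection between ideals of $\la'$ and ideals of $\la$ containing $\mathrm{Ker}\phi$; combining the contraction property with its forward counterpart forced by surjectivity cuts this down to a bijection between $\Sigma_{\la'}$ and $\mathrm{Ker}\phi\ua$. Continuity of $\phi_*$ is inherited from part (1); for the inverse map $I\mapsto \phi(I)$, I would verify that the preimage of a subbasic closed set $Y\ua\preccurlyeq \Sigma_{\la'}$ is $\phi\inv(\mathcal{D}_Y)\ua\wedge \mathrm{Ker}\phi\ua$, which is closed in the subspace topology.

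For part (3), the calculation is essentially symmetric in the two directions. The inclusion $\phi_*(\Sigma_{\la'})\preccurlyeq \mathrm{Ker}\phi\ua$ is immediate since $\mathrm{Ker}\phi\preccurlyeq \phi\inv(I')$ for every $I'$, and $\mathrm{Ker}\phi\ua$ is closed, so $\mathcal{C}(\phi_*(\Sigma_{\la'}))\preccurlyeq \mathrm{Ker}\phi\ua$. In the other direction, by the contraction property $\mathrm{Ker}\phi=\phi_*(\theta')$ belongs to $\phi_*(\Sigma_{\la'})$, so applying Lemma \ref{irrc} to $\mathrm{Ker}\phi\in\Sigma_{\la}$ yields $\mathrm{Ker}\phi\ua=\mathcal{C}(\mathrm{Ker}\phi)\preccurlyeq \mathcal{C}(\phi_*(\Sigma_{\la'}))$. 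Hence $\mathcal{C}(\phi_*(\Sigma_{\la'}))$ equals $\mathrm{Ker}\phi\ua$ exactly, and density of $\phi_*(\Sigma_{\la'})$ in $\Sigma_{\la}$ is equivalent to $\mathrm{Ker}\phi\ua=\Sigma_{\la}$, which is precisely the stated condition.

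The main obstacle is the delicate fact that $\mathrm{Ker}\phi$ must itself lie in $\phi_*(\Sigma_{\la'})$ so that Lemma \ref{irrc} can pin down $\mathcal{C}(\phi_*(\Sigma_{\la'}))$ exactly in part (3). In practice this is ensured by the natural convention that $\theta'\in\Sigma_{\la'}$, together with the contraction property placing $\mathrm{Ker}\phi=\phi_*(\theta')$ in $\Sigma_{\la}$. The analogous bookkeeping---that $\phi(I)\in\Sigma_{\la'}$ whenever $I\in\Sigma_{\la}$ contains $\mathrm{Ker}\phi$---is what upgrades the abstract ideal correspondence in part (2) into a bijection between the spectra themselves.
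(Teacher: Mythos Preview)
Your argument is essentially correct and tracks the paper's proof closely. Part~(1) is identical: the paper also computes $\phi_*\inv(I\ua)=\langle\phi(I)\rangle\ua$. For part~(2) the paper takes the dual route and shows directly that $\phi_*$ is a \emph{closed} map, computing $\phi_*(I\ua)=\phi\inv(I)\ua$ for subbasic closed $I\ua\preccurlyeq\Sigma_{\la'}$ and then pushing this through finite unions and arbitrary intersections; your choice to verify continuity of the inverse $I\mapsto\phi(I)$ instead is equivalent once the bijection is in place. You are also right that both arguments tacitly need the forward counterpart of the contraction property (that $\phi(I)\in\Sigma_{\la'}$ for $I\in\mathrm{Ker}\phi\ua\cap\Sigma_{\la}$) to get surjectivity of $\phi_*$ onto $\mathrm{Ker}\phi\ua$; the paper asserts $\mathrm{Im}\,\phi_*=\mathrm{Ker}\phi\ua$ without isolating this.

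For part~(3) the paper proceeds slightly differently: it first establishes the general identity $\mathcal{C}(\phi_*(I'^{\uparrow}))=\phi\inv(I')\ua$ for an arbitrary ideal $I'$ of $\la'$, and then specializes to $I'=\theta'$. Your route via Lemma~\ref{irrc} applied to $\mathrm{Ker}\phi=\phi_*(\theta')$ is shorter and reaches the same endpoint $\mathcal{C}(\phi_*(\Sigma_{\la'}))=\mathrm{Ker}\phi\ua$. The assumption you flag---that $\theta'\in\Sigma_{\la'}$ so that $\mathrm{Ker}\phi$ actually lies in the image---is genuinely needed for the reverse inclusion in both arguments, and the paper leaves it implicit.
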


\begin{proof}      
To show (1), let $I\in \id$ and $I^{\uparrow}$ be a   subbasic closed set of the quantale space $\Sigma_{\la}.$ Then  
\begin{align*}
\phi_*\inv(I^{\uparrow}) &=\left\{ I'\in  \Sigma_{\la'}\mid \phi\inv(I')\in I^{\uparrow}\right\}\\&=\left\{I'\in \Sigma_{\la'}\mid \phi(I)\preccurlyeq I'\right\}=\left\langle\phi(I)\right\rangle^{\uparrow}, 
\end{align*} 
and hence the map $\phi_*$  continuous.  
		
(2) Observe that $\mathrm{Ker}\phi\preccurlyeq \phi\inv(I')$ follows from the fact that  $\theta\preccurlyeq I'$ for all $I'\in \Sigma_{\la'}.$ It can thus been seen that $\phi_*(I')\in \mathrm{Ker}\phi^{\uparrow},$ and hence $\mathrm{Im}\phi_*=\mathrm{Ker}\phi^{\uparrow}.$  
If $I'\in \Sigma_{\la'},$ then
\[\phi\left(\phi_*\left(I'\right)\right)=\phi\left(\phi\inv\left(I'\right)\right)=I'.\]
Thus $\phi_*$ is injective. To show that $\phi_*$ is closed, first we observe that for any   subbasic closed set  $I^{\uparrow}$ of  $\Sigma_{\la'}$, we have
\begin{align*}
\phi_*\left(I^{\uparrow}\right)&=  \phi\inv\left(I^{\uparrow}\right)\\&=\phi\inv\left\{ I'\in \Sigma_{\la'}\mid I\preccurlyeq   I'\right\}=\phi\inv(I)^{\uparrow}. 
\end{align*}
Now if $\mathcal K$ is a closed subset of $\Sigma_{\la'}$ and $\mathcal K=\bigwedge_{ \omega \in \Omega} \left(\bigvee_{ i \,= 1}^{ n_{\omega}} I_{ i\lambda}^{\uparrow}\right),$ then
\begin{align*}
\phi_*(\mathcal K)&=\phi\inv \left(\bigwedge_{ \omega \in \Omega} \left(\bigvee_{ i = 1}^{ n_{\omega}} I_{ i\lambda}^{\uparrow}\right)\right)\\&=\bigwedge_{ \omega \in \Omega}\, \bigvee_{ i = 1}^{n_{\omega}} \phi\inv\left(I_{ i\omega}\right)^{\uparrow}
\end{align*}
a closed subset of  $\Sigma_{\la}.$ Since by (\ref{contxr}), $\phi_*$ is continuous, we have the desired claim.
	
(3) First we wish to show: \[\mathcal{C}\left(\phi_*\left(I'^{\uparrow}\right)\right)=\phi\inv\left(I'\right)^{\uparrow},\] for all ideals $X'$ of $\la'$. For that, let $J\in \phi_*\left(I'^{\uparrow}\right).$ This implies $\phi\left(J\right)\in I'^{\uparrow},$ and that means $I'\preccurlyeq \phi(J).$ Therefore, $J\in \phi\inv\left(I'\right)^{\uparrow}.$ Since $\phi\inv\left(I'^{\uparrow}\right)=\phi\inv\left(I'\right)^{\uparrow}$, the other inclusion follows. If we take $I'$ as the trivial ideal $\theta'$ in $\la'$, the above identity reduces to 
$\mathcal{C}\left(\phi_*\left(\Sigma_{\la'}\right)\right)=\mathrm{Ker}\phi^{\uparrow},$ and hence  $\mathrm{Ker}\phi^{\uparrow}$ to be equal to $\Sigma_{\la}$ if and only if $\mathrm{Ker}\phi\preccurlyeq \bigwedge_{I\in \Sigma_{\la}}I.$ 
\end{proof}  

\smallskip

\end{document}